\definecolor{brass}{rgb}{0.71, 0.65, 0.36}
\theoremstyle{plain}
\newtheorem{theorem}{Theorem}[section]
\newtheorem{proposition}[theorem]{Proposition}
\theoremstyle{definition}
\theoremstyle{definition}
\def\Ric{\operatorname{Ric}}
\def\R{\mathbb{R}}
\def\sup{\operatorname{sup}}
\def\sup{\operatorname{sup}}
\def\div{\operatorname{div}}
\numberwithin{equation}{section}
\numberwithin{equation}{section}
\begin{document}

\title[Spacetime Harmonic Functions and Applications]{Spacetime Harmonic Functions and Applications to Mass}

\author[Bray]{Hubert Bray}
\author[Hirsch]{Sven Hirsch}
\author[Kazaras]{Demetre Kazaras}
\address{Department of Mathematics\\
Duke University\\
Durham, NC 27708, USA}
\email{bray@math.duke.edu, sven.hirsch@duke.edu, demetre.kazaras@duke.edu, yiyue.zhang@duke.edu}

\author[Khuri]{Marcus Khuri}
\address{Department of Mathematics\\
Stony Brook University\\
Stony Brook, NY 11794, USA}
\email{khuri@math.sunysb.edu}

\author[Zhang]{Yiyue Zhang}


\thanks{M. Khuri acknowledges the support of NSF Grant DMS-1708798, and Simons Foundation Fellowship 681443.}

\begin{abstract}
In the pioneering work of Stern \cite{Stern}, level sets of harmonic functions have been shown to be an effective tool in the study of scalar curvature in dimension 3. Generalizations of this idea, utilizing level sets of so called spacetime harmonic functions as well as other elliptic equations, are similarly effective in treating geometric inequalities involving the ADM mass. In this paper, we survey recent results in this context, focusing on applications of spacetime harmonic functions to the asymptotically flat and asymptotically hyperbolic
versions of the spacetime positive mass theorem, and additionally introduce a new concept of total mass valid in both settings which is encoded in interpolation regions between generic initial data and model geometries. Furthermore, a novel and elementary proof of the positive mass theorem with charge is presented, and the level set approach to the Penrose inequality given by Huisken and Ilmanen is related to the current developments. Lastly, we discuss several open problems.
\end{abstract}

\maketitle

\section{Introduction}
\label{sec1}
\setcounter{equation}{0}
\setcounter{section}{1}

Euclidean space and hyperbolic space possess the following extremality property: any compactly supported perturbation to these geometries must somewhere decrease their scalar curvature. The present work will survey and explain recent developments related to the 3-dimensional asymptotically flat and hyperbolic positive mass theorems in mathematical general relativity. Geometrically speaking, these theorems refine the aforementioned extremality properties of $\mathbb{R}^n$ and $\mathbb{H}^n$ into the nonnegativity of a geometric invariant of manifolds which are merely asymptotic to -- as opposed to identically equal to -- the model geometries outside of a compact set.

The fact that $\mathbb{R}^n$ admits no compact perturbations increasing its scalar curvature is known as the Geroch conjecture. By identifying faces of a large coordinate cube in $\mathbb{R}^n$, one can deduce this fact from the non-existence of positive scalar curvature (psc) metrics on the torus $T^n$, first observed by Schoen-Yau \cite{SY,SY1} in dimensions less than 8 and subsequently established by Gromov-Lawson \cite{GromovLawson1} in all dimensions. Though it is far from obvious, an argument due to Lohkamp \cite{Lo} shows that the more far-reaching (Riemannian) positive mass theorem also follows from the statement that the connected sum of a torus and a closed manifold admits no psc metric.  See Gromov's Four Lectures \cite[Section 3.3]{GromovFour} for  further discussion of this and other rigidity phenomena related to scalar curvature.

A novel approach to the study of scalar has been initiated by Stern \cite{Stern}, in which a central role is played by the level sets of harmonic functions. Analogies may be drawn between the use of such level sets and the application of stable minimal hypersurfaces instituted by Schoen and Yau \cite{SY,SY0}, while the harmonic functions themselves may be compared with the harmonic spinors found in the fundamental work of Gromov and Lawson \cite{GromovLawson1,GromovLawson2}. The level set technique may be expanded by replacing harmonic functions with solutions to other geometrically motivated elliptic equations, leading to new tools with which to investigate initial data sets for the Einstein equations. The main idea is as follows.

Consider an orientable compact 3-dimensional Riemannian manifold $(M,g)$ with boundary $\partial M$, and let $u\in C^{\infty}(M)$ be a function that, for simplicity of discussion, has no critical points and admits the boundary $\partial M$ as a level set. Then a modified Bochner identity combined with (two traces of) the Gauss equations for level sets of $u$, yields an expression that may be integrated by parts to produce
\begin{equation}\label{master}
\int_{M}\frac{\left(\Delta u\right)^2}{|\nabla u|}dV-\int_{\partial M}2H|\nabla u|dA=\int_M \left(\frac{|\nabla^2 u|^2}{|\nabla u|}+(R-2K)|\nabla u|\right)dV,
\end{equation}
where $R$ is the scalar curvature of $g$, $K$ is the Gauss curvature of level sets, and $H$ is the mean curvature of the boundary with respect to the outward pointing normal. This formula leads to several well-known results by choosing $\Delta u$ appropriately. In particular, the harmonic prescription $\Delta u=0$ is sufficient to show that the torus $T^3$ does not admit a metric of positive scalar curvature \cite{BrayStern,Stern}, and can be used to establish the Bray-Brendle-Neves rigidity of $S^1\times S^2$ \cite{BrayBrendleNeves,Stern}, as well as the Riemannian positive mass theorem \cite{BKKS}. Choosing $u$ to satisfy the spacetime harmonic function equation
\begin{equation}
\Delta u=-\left(\mathrm{Tr}_g k\right)|\nabla u|,
\end{equation}
where $k$ is a symmetric 2-tensor representing extrinsic curvature of a spacelike slice, gives rise to the spacetime version of the positive mass theorem in the asymptotically flat \cite{HKK} and asymptotically hyperbolic settings \cite{BHKKZ}. Furthermore, if $\mathcal{E}$ denotes a divergence free vector field on $M$, then the associated drift Laplacian produces an appropriate equation
\begin{equation}\label{chargedlaplace}
\Delta u=\langle \mathcal{E},\nabla u\rangle,
\end{equation}
for the positive mass theorem with charge. Lastly, the equation governing the level set formulation of inverse mean curvature flow
\begin{equation}
\Delta u=|\nabla u|^2
+\frac{\langle \nabla u,\nabla|\nabla u|\rangle}{|\nabla u|},
\end{equation}
together with a weighted version of \eqref{master} yields the Penrose inequality as proven by Huisken and Ilmanen \cite{HI}.

The goal of this paper is to survey these results, with an emphasis on the applications of spacetime harmonic functions, and to introduce a new concept of mass via interpolation with model geometries. Furthermore, we discuss connections between spacetime harmonic functions and harmonic spinors, and point out relations with versions of the Jang equation. In addition, a novel and elementary proof of the positive mass theorem with charge is given utilizing \eqref{chargedlaplace}, and several open questions are discussed.

\section{Background}
\label{background}
\setcounter{equation}{0}
\setcounter{section}{2}

General relativity gives the best description of gravity known
to date.  From a mathematical point of view this theory can be
described as the study of Lorentzian 4-manifolds $(\mathbf{M}^{4},
\mathbf{g})$, where the metric $\mathbf{g}$ arises as a
solution to the Einstein field equations
\begin{equation}
\mathbf{Ric}-\frac{1}{2}\mathbf{R}\mathbf{g}=8\pi T,
\end{equation}
where $\mathbf{Ric}$ and $\mathbf{R}$ denote Ricci and scalar curvature, and $T$ is the stress-energy tensor which contains all relevant information concerning the matter fields.
It is often assumed that $T(v,w)\geq 0$ for all future pointing
non-spacelike vectors $v$, $w$, which simply asserts that all observed
energy densities are nonnegative, and is known as the \textit{dominant energy condition}.

It turns out that for many problems in general relativity,
including those addressed here, it is not necessary
to consider the entire spacetime, but rather just a spacelike
slice. Therefore we will restrict attention to initial data sets for
the Einstein equations $(M,g,k)$, consisting of a Riemannian 3-manifold $M$ with metric $g$ and a symmetric
2-tensor $k$ representing extrinsic curvature of the embedding into spacetime. As a consequence of this embedding, the tensors $g$ and $k$ must satisfy compatibility conditions, known as the \textit{constraint equations}, arising from traces of the Gauss-Codazzi relations
\begin{equation}
2\mu:=16\pi T(n,n)=R+(\mathrm{Tr}_{g} k)^{2}-|k|^{2},\text{ }\text{ }\text{ }\text{ }\text{ }\text{ }\text{ }
J:=8\pi
T(n,\cdot)=\operatorname{div}_{g}\left(k-(\mathrm{Tr}_{g} k)g\right),
\end{equation}
where $R$ is the scalar curvature of $g$, $n$ is the unit timelike normal to $M$, and $\mu$, $J$ are interpreted as the matter and momentum density of the matter fields. We note that the dominant energy condition implies that $\mu\geq |J|$, which places significant restrictions on the possible geometry and topology of an initial data set.  For instance, in the time symmetric ($k=0$) and maximal ($\mathrm{Tr}_{g}k=0$) cases this condition yields nonnegative scalar curvature.

An initial data set will be referred to as \textit{asymptotically flat}, if outside a compact set $M$ is the disjoint union of a finite number of ends, and for each end there is a diffeomorphism to the complement of a ball $\psi: M_{end}\rightarrow\mathbb{R}^3 \setminus B$ such that
\begin{equation}\label{AF}
\psi_{*} g-\delta \in C^{l,\alpha}_{-q}(\mathbb{R}^3 \setminus B),\quad\quad
\psi_* k \in C^{l-1,\alpha}_{-q-1}(\mathbb{R}^3 \setminus B),
\end{equation}
for some $l\geq 2$, $\alpha\in (0,1)$, and $q>\tfrac{1}{2}$. See \cite{Lee} for the definition of weighted H\"{o}lder spaces. The Hamiltonian formulation of general relativity given by Arnowitt, Deser, and Misner (ADM) \cite{ADM} gives rise to the total energy $E$ and linear momentum $P$ of each end. It will be assumed that the energy and momentum densities are integrable $\mu, J \in L^1(M)$, so that the ADM quantities are well-defined \cite{Bartnik,Chrusciel} and given by
\begin{equation}
E=\lim_{r\rightarrow\infty}\frac{1}{16\pi}\int_{S_{r}}\sum_i \left(g_{ij,i}-g_{ii,j}\right)\nu^j dA,\quad\quad
P_i=\lim_{r\rightarrow\infty}\frac{1}{8\pi}\int_{S_{r}} \left(k_{ij}-(\mathrm{Tr}_g k)g_{ij}\right)\nu^j dA,
\end{equation}
where $\nu$ is the unit outer normal to the coordinate sphere $S_r$ of radius $r=|x|$ and $dA$ denotes its area element. The ADM mass is then the Lorentzian length $m=\sqrt{E^2-|P|^2}$ of the ADM 4-momentum $(E,P)$. Under the dominant energy condition $\mu\geq |J|$, the 4-momentum is timelike unless the slice originates from Minkowski space, in which case it vanishes. This is the content of the spacetime positive mass theorem.

\begin{theorem}
Let $(M,g,k)$ be a 3-dimensional, complete, asymptotically flat initial data set for the Einstein equations. If the dominant energy condition is satisfied, then $E\geq |P|$ in each end. Moreover, $E=|P|$ in some end only if $E=|P|=0$ and the initial data arise from Minkowski space.
\end{theorem}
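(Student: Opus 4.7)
The plan is to prove the stronger inequality $E + \alpha \cdot P \geq 0$ for every unit vector $\alpha \in \mathbb{R}^3$ in the chosen asymptotic end; specializing $\alpha = -P/|P|$ then yields $E \geq |P|$. The central device is a spacetime harmonic function $u$ obtained by solving the Dirichlet problem
\begin{equation*}
\Delta u = -(\mathrm{Tr}_g k)|\nabla u| \text{ on } \Omega_L, \qquad u|_{\partial \Omega_L} = \alpha \cdot x,
\end{equation*}
on a sequence of large coordinate cubes $\Omega_L$ exhausting the chosen end, together with constant Dirichlet data on any secondary ends. Because the equation is quasilinear with a gradient-dependent coefficient, the first major obstacle is to produce $u$ with uniform $C^{1,\beta}$ bounds and an asymptotic expansion $u = \alpha \cdot x + o(1)$; this requires a regularization procedure coupled with suitable barrier constructions.

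Granted such a $u$, I would apply the master identity \eqref{master} on $\Omega_L$. Substituting the spacetime harmonic equation turns $(\Delta u)^2/|\nabla u|$ into $(\mathrm{Tr}_g k)^2|\nabla u|$, while the Hamiltonian constraint rewrites $R = 2\mu - (\mathrm{Tr}_g k)^2 + |k|^2$. The decisive algebraic move is to complete the square: combining $|\nabla^2 u|^2/|\nabla u|$ with $|k|^2 |\nabla u| + 2\langle \nabla^2 u, k\rangle$ gives the nonnegative integrand $|\nabla^2 u + |\nabla u|\,k|^2/|\nabla u|$, where the cross term $2\langle \nabla^2 u, k\rangle$ is produced by integrating $\mathrm{div}_g(k(\nabla u,\cdot) - (\mathrm{Tr}_g k)\nabla u)$ by parts. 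This divergence simultaneously injects $2 J \cdot \nabla u$ into the bulk, via the momentum constraint $J = \mathrm{div}_g(k - (\mathrm{Tr}_g k)g)$, and contributes new boundary terms of the form $k(\nabla u, \nu) - (\mathrm{Tr}_g k)\langle \nabla u,\nu\rangle$. After this rearrangement, the bulk integrand is a sum of a manifestly nonnegative square, the quantity $2(\mu|\nabla u| + J \cdot \nabla u) \geq 2(\mu - |J|)|\nabla u| \geq 0$ by the dominant energy condition, and a term $-2K|\nabla u|$ that expands via coarea to $-\int 4\pi \chi(\Sigma_t)\,dt$, controlled through topological information on the regular level sets of $u$ derived from its asymptotic form.

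The crucial analytic step is to pass to the limit $L \to \infty$ and identify the boundary integral with the ADM quantities: the mean-curvature contribution $\int 2H|\nabla u|$ reproduces $16\pi E$ via the standard ADM surface-integral formula applied to $u \sim \alpha\cdot x$, while the boundary $k$-terms from the divergence step reproduce $16\pi\,\alpha\cdot P$ via the momentum surface integral. Confirming this matching, and controlling error terms arising from the asymptotic expansions and from possible critical values of $u$ where the coarea decomposition degenerates, is the main technical hurdle of the proof. Combining, one obtains $16\pi(E + \alpha\cdot P) \geq 0$, hence $E \geq |P|$.

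For the rigidity clause, if $E = |P|$ the bulk integrand must vanish identically, forcing the pointwise identities $\nabla^2 u = -|\nabla u|\,k$ and $\mu = |J|$ in any sharp direction $\alpha$. Repeating the argument with three linearly independent choices of $\alpha$ produces a triple of spacetime harmonic functions whose gradients span $TM$; the coordinated Hessian relations then yield flat coordinates on $M$ in which $(g, k)$ takes the Minkowski form, from which one concludes $E = |P| = 0$ and that the initial data arise from Minkowski space.
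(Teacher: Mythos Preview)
Your proposal captures the analytic core of the argument correctly: the spacetime harmonic equation, the completion-of-the-square producing $|\bar{\nabla}^2 u|^2/|\nabla u|$, the injection of the momentum density via the divergence of $k-(\mathrm{Tr}_g k)g$, and the identification of the boundary integral with $16\pi(E+\langle\vec{a},P\rangle)$. This is essentially the paper's approach.

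However, there is a genuine gap in your handling of topology and secondary ends. You propose constant Dirichlet data on secondary ends, but these ends are noncompact; you would at minimum need to truncate them and then control the resulting boundary contributions, which will not vanish in general. More fundamentally, your control of the Gauss--Bonnet term ``through topological information on the regular level sets of $u$ derived from its asymptotic form'' is insufficient. The asymptotics of $u$ only constrain level sets near infinity in the chosen end; in the interior, or near other ends, regular level sets can contain spherical components whose Euler characteristic contributes with the wrong sign. The paper addresses this by first passing to a \emph{generalized exterior region} $M_{ext}$, constructed (using the dominant energy condition) to have a single end, apparent horizon boundary, and $H_2(M_{ext},\partial M_{ext};\mathbb{Z})=0$. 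On this region one solves the spacetime Laplace equation with carefully chosen \emph{admissible} constant Dirichlet data on each MOTS/MITS boundary component, arranged so that the inner boundary integrals vanish and, via the maximum principle together with the homology condition, no regular level set can be a sphere. Without this reduction your Euler characteristic estimate is unjustified.

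Your rigidity sketch also has a circularity. If $E=|P|$ with $P\neq 0$, the inequality $E+\langle\vec{a},P\rangle\geq 0$ is saturated only for the single direction $\vec{a}=-P/|P|$; you cannot immediately invoke three linearly independent sharp directions. One must first argue from the single vanishing spacetime Hessian that in fact $E=|P|=0$, after which every $\vec{a}$ is sharp and your three-function argument can proceed.
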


Initial versions of this result were obtained by Schoen and Yau \cite{SY1,SY2,SY3.0} as well as by Witten \cite{ParkerTaubes,Witten} in the early 1980's. The non-spacelike nature of the 4-momentum was established by Witten, with the hypersurface Dirac operator and a generalized Lichnerowicz formula, along with an outline for the rigidity statement. The case of equality was investigated further by Ashtekar and Horowitz \cite{AH}, and Yip \cite{Yp}, with a full proof given by Beig and Chru\'{s}ciel \cite{BC} under the asymptotic assumption $\mu, |J| =O(|x|^{-q-5/2})$. The Schoen and Yau approach treated the time-symmetric case first \cite{SY1}, when $k=0$, with stable minimal hypersurfaces, and then reduced the general case to this situation by employing the Jang equation \cite{SY2}. This technique proved that $E\geq 0$ along with the appropriate rigidity statement, assuming the additional asymptotic hypothesis $\mathrm{Tr}_g k=O(|x|^{-q-3/2})$ \cite{Eichmair2}. Alternatively, Eichmair, Huang, Lee, and Schoen \cite{EHLS} generalized the minimal hypersurface strategy by employing stable marginally outer trapped surfaces (MOTS) to show $|E|\geq |P|$. While the case of equality was not treated in \cite{EHLS}, Huang and Lee \cite{HuangLee} show that the rigidity statement follows from the inequality between $E$ and $P$. Two other strategies have been used for the Riemannian version of the positive mass theorem, namely a Ricci flow proof by Li \cite{Li} and the inverse mean curvature flow proof of Huisken and Ilmanen \cite{HI}.

In the higher dimensional setting, the MOTS approach \cite{EHLS} extends in a straightforward manner for dimensions up to and including 7. The Jang deformation has also been generalized by Eichmair \cite{Eichmair2} to these dimensions as well. Combining this with the rigidity argument of Huang and Lee \cite{HuangLee}, yields the desired result for dimensions less than 8 without the spin assumption. For spin manifolds Witten's method carries over to all dimensions, with the case of equality given by Chru\'{s}ciel and Maerten in \cite{CM}. In the case of K\"{a}hler manifolds, Hein and LeBrun \cite{HL} prove the Riemannian positive mass theorem in all dimensions. A compactification argument by Lohkamp \cite{Lo2}, analogous to the Riemannian case \cite{Lo}, reduces the inequality between $E$ and $P$ to the nonexistence of initial data with a strict dominant energy condition on the connected sum of a torus and a compact manifold. Moreover, in the articles \cite{SY3} and \cite{Lo1} the higher dimensional Riemannian problem is addressed by Schoen and Yau, and Lohkamp respectively. We recommend the book by Lee \cite{Lee}, for a detailed discussion of topics related to the positive mass theorem.

Asymptotically hyperbolic manifolds appear naturally in general relativity within two contexts. Namely, as asymptotically totaly geodesic spacelike hypersurfaces in asymptotically anti-de Sitter (AdS) spacetimes, as well as asymptotically hyperboloidal spacelike hypersurfaces in asymptotically flat spacetimes. Here we will focus on the second type, the quintessential example of which is the totally umbilic hyperboloid $t=\sqrt{1+r^2}$ in Minkowski space, whose induced metric $b=\tfrac{dr^2}{1+r^2}+r^2 \sigma$ is that of hyperbolic 3-space $\mathbb{H}^3$, where $\sigma$ is the round metric on $S^2$. An initial data set will be referred to as \textit{asymptotically hyperboloidal}, if outside a compact set $M$ is the disjoint union of a finite number of ends, and for each end there is a diffeomorphism to the complement of a ball $\psi: M_{end}\rightarrow\mathbb{H}^3 \setminus B$ such that
\begin{equation}
h:=\psi_{*} g-b \in C^{l,\alpha}_{-q}(\mathbb{H}^3 \setminus B),\quad
p:=\psi_* (k-g) \in C^{l-1,\alpha}_{-q}(\mathbb{H}^3 \setminus B),\quad
\psi_* \mu, \psi_* J\in C^{l-2,\alpha}_{-3-\epsilon}(\mathbb{H}^3 \setminus B)
\end{equation}
for some $l\geq 2$, $\alpha\in (0,1)$, $q>\tfrac{3}{2}$, and $\epsilon>0$. With these asymptotics, a well-defined notion of total energy and linear momentum is possible due to Chru\'{s}ciel, Herzlich, Jezierski, and {\L}\c{e}ski \cite{ChruscielHerzlich,CJL,Michel}. Namely, for each function $V\in C^{\infty}(\mathbb{H}^3)$ consider the \textit{mass functional}
\begin{equation}
H(V)=\lim_{r\rightarrow\infty}\frac{1}{16\pi}\int_{S_{r}}
\left[V\left(\operatorname{div}_b h -d\mathrm{Tr}_b h\right)+\mathrm{Tr}_b\left( h+2p\right)dV-\left(h+2p\right)\left(\nabla V,\cdot\right)\right](\nu) dA_b,
\end{equation}
where again $\nu$ is the unit outer normal to the coordinate sphere $S_r$. In order to isolate the 4-momentum, the function $V$ should satisfy the `static equation' $\mathrm{Hess}_b V=Vb$. A basis of solutions is obtained by restricting the coordinate functions of Minkowski space to the canonical hyperboloid: $\sqrt{1+r^2}$, and $x^i$, $i=1,2,3$. The energy, linear momentum, and mass are then given by
\begin{equation}
E=H\left(\sqrt{1+r^2}\right),\quad\quad P_i =H\left(x^i \right),\text{ }\text{ }i=1,2,3,\quad\quad m=\sqrt{E^2 -|P|^2}.
\end{equation}
In an asymptotically flat spacetime, the difference between the hyperboloidal mass and the ADM mass, is related to the amount of mass lost due to radiation.

The study of mass for asymptotically hyperbolic manifolds was initiated by Wang \cite{Wang}. The asymptotics utilized in \cite{Wang} are more restrictive, however they are more concrete in that they identify which portion of the metric and extrinsic curvature contribute to the mass. An asymptotically hyperboloidal initial data set is said to have \textit{Wang asymptotics} if $\tau=3$, and there are symmetric 2-tensors $\mathbf{m}$ and $\mathbf{p}$ on $S^2$ such that
\begin{equation}
\psi_* g= \frac{dr^2}{1+r^2} +r^2 \left(\sigma+\frac{\mathbf{m}}{r^3}+O_2(r^{-4})\right),\quad\quad
\psi_* (k-b)= \frac{\mathbf{p}}{r}+O_1(r^{-2}).
\end{equation}
The trace $\mathrm{Tr}_{\sigma}\left(\mathbf{m}+2\mathbf{p}\right)$ is typically referred to as the \textit{mass aspect function} and gives rise to the 4-momentum in this setting
\begin{equation}
E=\frac{1}{16\pi}\int_{S^2}\mathrm{Tr}_{\sigma}\left(\mathbf{m}+2\mathbf{p}\right)dA_{\sigma},
\quad\quad P_i =\frac{1}{16\pi}\int_{S^2}x^i \mathrm{Tr}_{\sigma}\left(\mathbf{m}+2\mathbf{p}\right)dA_{\sigma}, \text{ }\text{ }i=1,2,3,
\end{equation}
where $x^i$ are Cartesian coordinates of $\mathbb{R}^3$ restricted to the unit sphere $S^2$. We point out that the asymptotic conditions for the extrinsic curvature are integral to the definition of the energy and linear momentum. For instance, there exist asymptotically hyperboloidal slices of the Schwarzschild, as well as asymptotically totally geodesic slices of the AdS Schwarzschild, whose induced metrics are that of hyperbolic space \cite[Remark 1.5]{ChenWangYau}. The hyperboloidal version of the positive mass theorem may be stated as follows.

\begin{theorem}
Let $(M,g,k)$ be a 3-dimensional, complete, asymptotically hyperboloidal initial data set for the Einstein equations with a single end, and $l\geq 6$. If the dominant energy condition is satisfied, then $E\geq |P|$. Moreover, if $E=0$ and Wang asymptotics are satisfied, then the initial data arise from Minkowski space.
\end{theorem}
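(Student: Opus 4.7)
The overall strategy is to adapt the spacetime harmonic function method of \cite{HKK} to the asymptotically hyperboloidal setting, following the approach taken in \cite{BHKKZ}. Whereas in the asymptotically flat case the Dirichlet data for $u$ is modeled on a linear coordinate function on $\R^3$, here the boundary data should be built from the static potentials $V\in\{\sqrt{1+r^2},x^1,x^2,x^3\}$ on $\mathbb{H}^3$ that enter the mass functional $H(V)$. For each such $V$, the plan is to solve a modified spacetime harmonic equation of the form
\begin{equation*}
\Delta u+(\mathrm{Tr}_g k)|\nabla u|=f(u,|\nabla u|),
\end{equation*}
on an exhaustion of $M$ by large coordinate domains $\Omega_r$ with Dirichlet boundary values converging to $V$. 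The nonlinearity $f$ should be selected so that a weighted Bochner identity for $u$ absorbs the reference scalar curvature of $\mathbb{H}^3$ and yields a hyperboloidal analogue of \eqref{master} in which the surface term at infinity reproduces a constant multiple of $H(V)$.

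The main steps would be as follows. First, solve the modified Dirichlet problem uniformly in $r$ via continuity or fixed point methods, obtaining uniform estimates and extracting a limit $u$ on $M$. Second, derive an asymptotic expansion $u\sim V$ precise enough that the boundary integrals at $S_\infty$ converge to $H(V)$ up to controlled errors. Third, integrate the modified Bochner identity and apply the constraint equations to rewrite the bulk combinations as $2\mu$ and components of $J$; the dominant energy condition then guarantees that the bulk integrand is nonnegative, consisting of a squared spacetime Hessian term of the form $|\nabla^2 u+|\nabla u|\,k|^2/|\nabla u|$ plus $2(\mu-|J|)|\nabla u|$ together with appropriate hyperbolic corrections. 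Fourth, conclude $E\geq H(x^i)$ for each $i=1,2,3$, and upgrade to $E\geq|P|$ by exploiting the action of hyperbolic isometries on the set of static potentials spanning a component of the timelike cone.

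For the rigidity case $E=0$ under Wang asymptotics, the vanishing of the bulk integrand should force $\mu=|J|=0$ together with a rigidity equation of spacetime Hessian type on $u$, foliating $M$ by totally umbilic model slices with $k$ aligned to the second fundamental form of the canonical hyperboloid. Combining this with the asymptotic conditions and a unique continuation argument then identifies $(M,g,k)$ with a slice in Minkowski space. The main obstacle I anticipate is the second step: the $1/r^3$ rate of the mass aspect is extremely delicate, and controlling the asymptotics of $u$ finely enough to avoid spurious contributions to the surface integral appears to be precisely what forces the refined regularity hypothesis $l\geq 6$, which is stronger than the threshold needed in the asymptotically flat analogue. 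A secondary difficulty is the choice of the nonlinearity $f$ so that the Bochner formula simultaneously produces a manifestly nonnegative bulk and the correctly shifted curvature $R+6$ that is natural in the hyperbolic background.
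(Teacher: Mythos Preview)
Your proposal has the right broad outline but two concrete missteps, and it misses the paper's central technical device.

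First, the asymptote and the equation. You propose to model $u$ on an individual static potential $V\in\{\sqrt{1+r^2},x^1,x^2,x^3\}$ and to compensate by inserting an unspecified nonlinearity $f$ into the spacetime Laplacian. The paper does neither. It keeps the \emph{unmodified} equation $\Delta u+(\mathrm{Tr}_g k)|\nabla u|=0$ and takes the asymptote to be the \emph{null} combination $v_{\vec a}=-\sqrt{1+r^2}+\langle\vec a,x\rangle$, $|\vec a|=1$. The point is that $v_{\vec a}$ is the restriction to the hyperboloid of a null linear function on Minkowski space, and so its spacetime Hessian $\bar\nabla^2 v_{\vec a}$ vanishes identically; in particular $v_{\vec a}$ already solves the unmodified equation on the model. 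An individual potential such as $\sqrt{1+r^2}$ is the restriction of a \emph{timelike} linear function and does not have this property, so with your choice the linearized error does not decay and no ad hoc $f$ will repair the Bochner identity into the clean form $|\bar\nabla^2 u|^2/|\nabla u|+2(\mu-|J|)|\nabla u|$ without destroying nonnegativity. With the null asymptote one obtains directly $E+\langle\vec a,P\rangle\ge 0$ for every unit $\vec a$, whence $E\ge|P|$; your route through $E\ge H(x^i)=P_i$ plus isometries is an unnecessary detour.

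Second, the boundary computation. You correctly flag the surface integral at infinity as the hard step, but you do not say how to carry it out, and in fact the paper does not compute it directly on $(M,g,k)$. Instead it introduces an \emph{interpolation}: one builds auxiliary data $(\tilde M,\tilde g,\tilde k)$ equal to $(M,g,k)$ on a large ball and equal to the exact hyperboloid $(\mathbb H^3,b,b)$ outside a larger ball. On $(\tilde M,\tilde g,\tilde k)$ the end is exactly hyperbolic, the expansion of $u$ improves dramatically, and the boundary integral at infinity vanishes. The mass of the original data is then recovered as the limit of the bulk integral over the transition annulus as it escapes to infinity (the annulus ``encodes the mass''). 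This trick is what makes the method work despite the $r^{-3}$ mass-aspect rate, and it is available precisely because the integral identity does not require the dominant energy condition, which is violated in the annulus. Without this device, obtaining the expansion of $u$ on generic asymptotically hyperboloidal data to the precision you need is not known to be feasible.
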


This theorem as stated is established by Sakovich in \cite{Sakovich}. Her method derives from a deformation argument of Schoen and Yau \cite{SY4}, which involves solving the Jang equation so that the solution admits hyperboloidal asymptotics. The induced metric on the Jang graph is then asymptotically flat, with ADM mass that agrees (up to a positive multiplicative factor) with the hyperbolic mass; further generalizations of this deformation procedure have been studied in \cite{ChaKhuriSakovich,ChaKhuri}. The desired result then follows from the positive mass theorem in the asymptotically flat setting.  The initial proofs by Wang \cite{Wang}, as well as Chru\'{s}ciel and Herzlich \cite{ChruscielHerzlich}, relied on spinor techniques and are valid in all dimensions $n\geq 3$ (for spin manifolds) under the scalar curvature lower bound $R\geq -n(n-1)$ which is equivalent to the dominant energy condition when $k=g$ (the umbilic case).
There is a substantial literature in which spinor techniques have been applied to the study of hyperbolic mass, namely \cite{CM,CrMTo,Maerten,XZ,Zhang,Zhang1}.
Andersson, Cai, and Galloway \cite{ACG} were able to remove the spin condition for dimensions $n\leq 7$ with the added assumption that the mass aspect function does not change sign. Their proof, which was inspired by the minimal hypersurface approach of Schoen and Yau in the asymptotically flat setting, relies on stable constant mean curvature hypersurfaces as well as a generalization of the Lohkamp deformation \cite{Lo}. Recently the spin condition has been removed in all dimensions for the umbilic case (as well as others) by Chru\'{s}ciel and Delay \cite{ChruscielDelay}, where the proof relies on the asymptotically flat positive mass theorem. Huang, Jang, and Martin \cite{HJM} treat the rigidity statement in this setting. We would also like to mention an alternate proof of Chen, Wang, and Yau \cite{ChenWangYau} based on the asymptotic limit of quasi-local mass, which holds for a certain type of asymptotics.

\section{Statement of Results}
\label{results}
\setcounter{equation}{0}
\setcounter{section}{3}

In this section we present the main results concerning the study of mass via level sets of solutions to elliptic equations. A novel feature of this approach is that explicit lower bounds for the mass are achieved without the need to assume nonnegative scalar curvature, or more generally the dominant energy condition. This is in contrast to the spinor technique, which although yields explicit lower bounds, requires the dominant energy condition in order to establish existence of the appropriate harmonic spinor.

Consider the master identity \eqref{master}. As discussed in the introduction, different choices for $\Delta u$ in this formula lead to different geometric inequalities for the mass. We will first examine the choice of a harmonic function, $\Delta u=0$. Recall that for each asymptotically flat end $M_{end}$ there is a so called \textit{exterior region} $M_{ext}$ containing $M_{end}$, that has minimal boundary, and which is diffeomorphic to $\mathbb{R}^3$ with a finite number of disjoint balls removed \cite[Lemma 4.1]{HI}. Such a region is desired when applying \eqref{master}, in order to avoid possible nonseparating sphere level sets of $u$ that can contribute adversely to the mass inequality.

\begin{theorem}\label{thm1}
Let $(M,g)$ be a smooth complete asymptotically flat Riemannian 3-manifold having mass $m$ in a chosen end, with exterior region $(M_{ext},g)$. If $u$ is a harmonic function on the exterior region which is asymptotic to an asymptotically flat coordinate function, and satisfies zero Neumann boundary conditions on $\partial M_{ext}$, then
\begin{equation}\label{intthm1}
m \geq \frac{1}{16\pi} \int_{M_{ext}}\left(\frac{|\nabla^2 u|^2}{|\nabla u|}+R |\nabla u|\right) dV.
\end{equation}
Consequently, if the scalar curvature $R\geq 0$ then $m\geq 0$. Moreover, $m=0$ if and only if $(M,g)$ is isometric to $(\mathbb{R}^3,\delta)$. In the special case that $M$ is diffeomorphic to $\mathbb{R}^3$, the integral of \eqref{intthm1} may be taken over all of $M$.
\end{theorem}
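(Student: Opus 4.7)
The plan is to apply the identity \eqref{master} with $\Delta u=0$ on an exhaustion of $M_{ext}$ by coordinate balls $\Omega_r=M_{ext}\cap\{|x|\leq r\}$, pass to the limit as $r\to\infty$, and extract $16\pi m$ from the outer boundary behaviour while showing the inner boundary is harmless. Fix asymptotically flat coordinates in the chosen end; existence of a harmonic $u$ on $M_{ext}$ with $u-x^1\to 0$ at infinity and $\partial_\nu u|_{\partial M_{ext}}=0$ is a standard consequence of Fredholm theory for the mixed Dirichlet--Neumann problem in weighted H\"older spaces on asymptotically flat manifolds, along the lines of \cite{BKKS}. Because $u$ may have critical points, I would work with the regularization $\sqrt{|\nabla u|^2+\varepsilon}$ in place of $|\nabla u|$, using Sard so that almost every $t$ is a regular value of $u$, and then let $\varepsilon\to 0$ in the final estimate.

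Since \eqref{master} is stated for $\partial M$ a level set of $u$, which is not our situation, I would re-derive its analogue on $\Omega_r$ by combining the Bochner formula for $|\nabla u|$ (with $\Delta u=0$) with the Gauss equation for regular level sets $\Sigma_t=u^{-1}(t)$, and then integrating via coarea. This produces a bulk identity whose right-hand side is exactly $\int_{\Omega_r}(|\nabla^2 u|^2/|\nabla u|+R|\nabla u|)\,dV$, and whose left-hand side is a coarea-weighted Gauss curvature integral on the level sets plus outer and inner boundary contributions $\mathcal{B}_{S_r}$ and $\mathcal{B}_{\partial M_{ext}}$. Applying Gauss--Bonnet on each slice $\Sigma_t\cap\Omega_r$ converts the Gauss curvature integral into an Euler characteristic term plus geodesic curvature integrals along $\Sigma_t\cap S_r$ and $\Sigma_t\cap\partial M_{ext}$. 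The key step — and the principal technical obstacle — is then the asymptotic matching at infinity: using $g=\delta+O(|x|^{-q})$ and the expansion of $u$ as an asymptotically flat coordinate function, one must verify that the combined outer geodesic-curvature term and $\mathcal{B}_{S_r}$ reduce, in the limit $r\to\infty$, to the ADM surface integral and yield $16\pi E=16\pi m$, using that $P=0$ in the Riemannian setting.

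The inner contribution is controlled by the interplay between the Neumann condition and minimality: $\partial_\nu u=0$ makes $\nabla u$ tangent to $\partial M_{ext}$, so regular level sets meet the boundary orthogonally; combined with $H_{\partial M_{ext}}=0$, the resulting geodesic and mean curvature terms in $\mathcal{B}_{\partial M_{ext}}$ cancel, or at worst produce a term with the correct sign to be absorbed. Passing $r\to\infty$ then yields \eqref{intthm1}, and $m\ge 0$ under $R\ge 0$ is immediate. For the rigidity case $m=0$ with $R\ge 0$, the vanishing of the right-hand side of \eqref{intthm1} forces $\nabla^2 u\equiv 0$, so $\nabla u$ is a parallel unit vector field on $M_{ext}$; repeating the construction with the other two asymptotic coordinate directions produces a parallel orthonormal frame, forcing $(M_{ext},g)$ to be flat. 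The minimal boundary must then be empty, and flatness together with asymptotic flatness gives $(M,g)\cong(\mathbb{R}^3,\delta)$. When $M\cong\mathbb{R}^3$ from the start, no exterior excision is needed and the Neumann condition is vacuous, so the identical argument applies with the integral taken over all of $M$.
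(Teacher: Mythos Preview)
Your proposal is correct and follows essentially the same route as \cite{BKKS} (to which the paper defers): the harmonic case of \eqref{master} on an exhaustion, Gauss--Bonnet on level sets, the outer boundary limit producing the ADM mass, Neumann-plus-minimality neutralizing the inner boundary, and the parallel-frame argument for rigidity. One point you pass over is the sign control on the Euler characteristic term after Gauss--Bonnet: you need $\chi(\Sigma_t)\leq 1$ for each regular level set, which follows because the maximum principle forbids closed components and the exterior region topology $M_{ext}\cong\mathbb{R}^3\setminus\bigcup_i B_i$ then forces every component to be a plane with holes---this is exactly why one restricts to the exterior region in the first place, and it deserves a sentence.
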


This theorem, which is established in \cite{BKKS}, may be generalized to the case in which $(M,g)$ has a boundary of nonpositive mean curvature, where the mean curvature is computed with respect to the normal pointing towards the asymptotic end in question. These surfaces, as below, are referred to as `trapped' and are connected with gravitational collapse \cite{Wa}. When such surfaces are present, and the scalar curvature is nonnegative, the proof of \eqref{intthm1} produces a strict inequality $m>0$. We mention also that an expression for the mass, related to Theorem \ref{thm1}, was obtained by Miao in \cite{Miao2}. Furthermore, a version of this result for manifolds with corners is given by Hirsch, Miao, and Tsang \cite{HMT}.

The use of Neumann boundary conditions ensures that certain boundary integrals vanish. However, an alternate approach is available in which the harmonic functions need not have boundary conditions prescribed. This may be accomplished with the Mantoulidis-Schoen neck construction \cite{MS}, whereby the boundary spheres of the exterior region are capped-off with 3-balls of nonnegative scalar curvature to produce a new manifold that is diffeomorphic to $\mathbb{R}^3$. The function $u$ may then be taken to be harmonic on the new manifold of trivial topology.

We now consider the spacetime setting. Let $\Sigma$ denote a 2-sided closed hypersurface in $M$ with null expansions given by $\theta_{\pm}=H\pm \mathrm{Tr}_{\Sigma}k$, in which $H$ is the mean curvature of $\Sigma$ with respect to $\upsilon$ the unit normal pointing towards a designated end. The surface $\Sigma$ may be viewed as embedded within spacetime, where the null expansions are then the mean curvatures in the null directions $\upsilon\pm n$; here $n$ is the future pointing timelike normal to the spacelike hypersurface $(M,g,k)$.
It follows that physically these quantities can be interpreted as measuring the rate of change of area of a shell of light emanating from the surface in the
outward future/past direction, and hence are indicators of the strength of the gravitational field. A strong gravitational field is associated with an \textit{outer or inner trapped} surface, that is when $\theta_{+}<0$ or $\theta_{-}<0$. Furthermore, $\Sigma$ is referred to as a \textit{marginally outer or inner trapped surface} (MOTS or MITS) if $\theta_+ =0$ or $\theta_- =0$.

In analogy with the time symmetric setting, it is important to control the topology of regular level sets for the relevant function $u$ appearing in \eqref{master}. This was previously achieved with the help of an exterior region, obtained from identifying the outermost minimal surface with respect to a particular end.
In the spacetime setting it is not known whether an appropriate exterior region, using the outermost MOTS/MITS, always exists. In its place we use the notion of a
\textit{generalized exterior region} associated with a designated end. More precisely, as shown in \cite[Proposition 2.1]{HKK}, for each end there exists a new initial data set $(M_{ext},g_{ext},k_{ext})$ with the following properties. Namely, it has a single end that agrees with the original, $M_{ext}$ is orientable with a boundary (possibly empty) comprised of MOTS and MITS, and satisfies the homology condition $H_2(M_{ext},\partial M_{ext};\mathbb{Z})=0$.

The appropriate choice of equation to use in the spacetime context, when applying the primary identity \eqref{master}, is given by
\begin{equation}\label{spacetimeh}
\Delta u+\left(\mathrm{Tr}_g k\right)|\nabla u|=0.
\end{equation}
Solutions of this equation are called \textit{spacetime harmonic functions}.
The left-hand side of \eqref{spacetimeh} arises as the trace along $M$ of the \textit{spacetime Hessian}
\begin{equation}
\bar{\nabla}_{ij}u=\nabla_{ij}u+k_{ij}|\nabla u|,
\end{equation}
which indicates some similarity with the hypersurface Dirac operator introduced by Witten \cite{Witten}. Further discussion of spacetime harmonic functions may be found in Section \ref{S: spacetime PMT}. We will say that a spacetime harmonic function $u$, on a generalized exterior region $M_{ext}$, is \textit{admissible} if it realizes constant Dirichlet boundary data together with $\partial_{\upsilon}u\leq (\geq) 0$ on each boundary component satisfying $\theta_+ =0$ ($\theta_- =0$), and there is at least one point on each boundary component where $|\nabla u|=0$. The existence of admissible spacetime harmonic functions that asymptote to a given linear function in the asymptotically flat end is established in \cite[Lemma 5.1]{HKK}.

\begin{theorem}\label{thm2}
Let $(M,g,k)$ be a smooth complete asymptotically flat 3-dimensional initial data set for the Einstein equations, with energy $E$ and linear momentum $P$ in a chosen asymptotic end $M_{end}$.
\begin{itemize}
\item [(i)]
If the dominant energy condition holds, then a generalized exterior region $M_{ext}$ exists which is associated with $M_{end}$, and satisfies the dominant energy condition. Let $\langle \vec{a},x\rangle =a_i x^i$ be a linear combination of asymptotically flat coordinates of the associated end, with $|\vec{a}|=1$. If $u$ is an admissible spacetime harmonic function on $M_{ext}$, asymptotic to this linear function, then
\begin{equation}\label{spinteq}
E+\langle \vec{a}, P\rangle\geq \frac{1}{16\pi} \int_{M_{ext}}\left(\frac{|\bar{\nabla}^2 u|^2}{|\nabla u|}
+2(\mu-|J|)|\nabla u|\right) dV.
\end{equation}
Consequently $E\geq |P|$. Moreover if $E=|P|$ then $E=|P|=0$, and the data $(M,g,k)$ arise from an isometric embedding into Minkowski space.

\item [(ii)]
Let $(M_{ext},g,k)$ be a generalized exterior region which does not necessarily satisfy the dominant energy condition. Then \eqref{spinteq} still
holds.

\item [(iii)]
In the special case that $M$ is diffeomorphic to $\mathbb{R}^3$, the integral of \eqref{spinteq} may be taken over $M$.
\end{itemize}
\end{theorem}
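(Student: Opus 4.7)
My approach is to apply the master identity \eqref{master} to an admissible spacetime harmonic function on the generalized exterior region, and to trade the scalar and Gauss curvature contributions for the physical densities $\mu, |J|$ via the constraint equations, with boundary integrals at infinity recovering $E + \langle \vec{a}, P\rangle$. First I invoke \cite[Proposition 2.1]{HKK} to reduce to a generalized exterior region $(M_{ext},g,k)$ with MOTS/MITS boundary and $H_2(M_{ext},\partial M_{ext};\Z) = 0$; this homology vanishing is crucial because it forces connectedness of generic regular level sets of $u$, which is needed for the Gauss--Bonnet accounting below. Next, by \cite[Lemma 5.1]{HKK}, I solve \eqref{spacetimeh} for an admissible spacetime harmonic function $u$ asymptotic to $\langle \vec{a}, x\rangle$ in the chosen end; this rests on a weighted Fredholm theory for \eqref{spacetimeh} together with barrier arguments on each MOTS/MITS boundary component to enforce the sign of $\partial_\upsilon u$.

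The main identity arises by substituting $\Delta u = -(\mathrm{Tr}_g k)|\nabla u|$ into \eqref{master} and using the Hamiltonian constraint $R = 2\mu - (\mathrm{Tr}_g k)^2 + |k|^2$. Completing the square through the algebraic identity
\[
\frac{|\bar\nabla^2 u|^2}{|\nabla u|} = \frac{|\nabla^2 u|^2}{|\nabla u|} + 2 k^{ij}\nabla_i\nabla_j u + |k|^2 |\nabla u|
\]
introduces a mixed term $\int 2 k^{ij}\nabla_i\nabla_j u$, which I integrate by parts. Using the momentum constraint $J = \div_g(k - (\mathrm{Tr}_g k)g)$, along with a second integration by parts against $\mathrm{Tr}_g k$ (combined with the spacetime harmonic equation) produces the bulk contribution $-2\int \langle J,\nabla u\rangle$, up to boundary terms. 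Applying the pointwise bound $\langle J,\nabla u\rangle \geq -|J||\nabla u|$ then yields the target integrand $\frac{|\bar\nabla^2 u|^2}{|\nabla u|} + 2(\mu - |J|)|\nabla u|$, modulo a remaining $\int 2K|\nabla u|$ term and boundary contributions.

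The boundary analysis has three ingredients. First, the Gauss curvature term is converted via the coarea formula into $\int dt \int_{\Sigma_t} 2K\, dA$, and Gauss--Bonnet gives $4\pi \chi(\Sigma_t)$; connectedness and planar topology of generic level sets (forced by the homology hypothesis on $M_{ext}$) ensure this contribution matches and cancels the leading-order divergent piece of the large-sphere boundary integral. Second, at infinity I expand $u \sim \langle\vec{a},x\rangle + o(1)$ in weighted H\"{o}lder spaces and plug into the boundary terms $-2H|\nabla u|$ and $-2k^{ij}\nabla_j u\,\nu_i$, verifying by explicit computation that the leading-order piece assembles into $16\pi(E + \langle \vec{a},P\rangle)$; the $E$-contribution emerges from the mean-curvature boundary term and the $P$-contribution from the $k$-boundary term, matching the ADM formulas under the assumption $|\vec{a}|=1$. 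Third, on each MOTS/MITS component of $\partial M_{ext}$, the admissibility condition $\partial_\upsilon u \leq 0$ (or $\geq 0$) together with $\theta_+ = 0$ (or $\theta_- = 0$) makes the combined boundary integrand $-2(H \pm \mathrm{Tr}_\Sigma k)|\nabla u|$ nonpositive, allowing it to be dropped from the lower bound. Critical points of $u$ are handled by the regularization of $|\nabla u|$ to $\sqrt{|\nabla u|^2 + \varepsilon^2}$, with integrability controlled via coarea on regular values and Sard's theorem.

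These steps assemble into \eqref{spinteq}, which proves part (ii); part (i) then follows under DEC from $\mu - |J| \geq 0$ by choosing $\vec{a} = -P/|P|$ when $P \neq 0$, and part (iii) is immediate because when $M \cong \R^3$ one may take $M_{ext} = M$ with empty boundary and no homology reduction is needed. The main obstacle, as I see it, is the rigidity statement in (i): the saturation $E = |P|$ forces $\bar\nabla^2 u \equiv 0$ together with $\mu = |J|$, and one must construct three such independent admissible functions, one for each asymptotic coordinate direction, in order to assemble an isometric immersion of $(M,g,k)$ into $\R^{3,1}$. Upgrading this immersion to a global isometric embedding and identifying the image as a slice of Minkowski space requires a Killing development / unique continuation argument in the spirit of Beig--Chru\'{s}ciel \cite{BC} and Huang--Lee \cite{HuangLee}, handling possible degenerate loci of $\nabla u$ and ensuring that the developing map is injective.
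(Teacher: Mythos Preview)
Your proposal is correct and follows the same strategy as the paper (and \cite{HKK}): pass to a generalized exterior region, solve for an admissible spacetime harmonic function, feed it into the expanded master identity together with the constraint equations, and combine Gauss--Bonnet on level sets with the MOTS/MITS boundary structure to extract $E+\langle\vec a,P\rangle$. One point to sharpen: the control on level-set topology is not forced by the homology hypothesis $H_2(M_{ext},\partial M_{ext};\mathbb{Z})=0$ alone, as you write---that hypothesis only says every closed surface is homologous to a union of boundary components, and it is the \emph{critical-point} part of admissibility (a point on each boundary component where $|\nabla u|=0$) together with the strong maximum principle that prevents any regular level set from being homologous to the boundary, hence ruling out spherical components and giving $\chi(\Sigma_t)\le 1$; relatedly, on the inner boundary the integrand is not merely nonpositive but exactly zero since $\theta_\pm=0$ there.
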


This theorem is established in \cite{HKK}. A version also holds where the generalized exterior region has weakly trapped boundary, instead of an apparent horizon boundary. This means that each boundary component satisfies $\theta_{+}\leq 0$ or $\theta_{-}\leq 0$. In this case, if in addition the dominant energy conditions is valid, then the conclusion is the strict inequality $E>|P|$. Each boundary component thus has a nontrivial contribution to the mass, and it would be of interest to more accurately determine this amount.

It turns out that spacetime harmonic functions are instrumental in the study of
mass in the asymptotically hyperbolic regime as well. As in the asymptotically flat case, these functions must grow linearly in the asymptotic end in order to `pluck out' the mass from the identity \eqref{master}. In order to find the appropriate model function to which the spacetime harmonic function should approach,
consider the linear function $\ell=-t +\langle \vec{a},x\rangle$ in Minkowski space. When restricted to the hyperboloid $t=\sqrt{1+r^2}$, this function satisfies the spacetime harmonic function equation \eqref{spacetimeh}, in fact its spacetime Hessian vanishes. Moreover, the level sets of $\ell$ intersected with the hyperboloid give horospheres in hyperbolic space $\mathbb{H}^3$, and $\nabla_{\mathbb{H}^3}\ell$ is a conformal Killing field on hyperbolic space. These properties suggest the asymptote
\begin{equation}
v_{\vec{a}}=-\sqrt{1+r^2}+\langle \vec{a},x\rangle,
\end{equation}
which is defined in any asymptotically hyperboloidal end. Note that the functions $x^i$ may be expressed in the asymptotically hyperboloidal coordinate system by using their polar form.

\begin{theorem}\label{thm3}
Let $(M,g,k)$ be a smooth complete asymptotically hyperboloidal 3-dimensional initial data set for the Einstein equations, having one end with energy $E$ and linear momentum $P$. Fix $\vec{a}\in\mathbb{R}^3$ with $|\vec{a}|=1$, and assume that the second integral homology group $H_2(M;\mathbb{Z})$ is trivial. Then there exists a spacetime harmonic function $u$ which is asymptotic to $v_{\vec{a}}$, and
\begin{equation}\label{spinteqh}
E+\langle \vec{a}, P\rangle\geq \frac{1}{16\pi} \int_{M}\left(\frac{|\bar{\nabla}^2 u|^2}{|\nabla u|}
+2(\mu-|J|)|\nabla u|\right) dV.
\end{equation}
Consequently, if the dominant energy condition holds then $E\geq |P|$. Moreover if $E=0$, then the data $(M,g,k)$ arise from an isometric embedding into Minkowski space. In the special case that $k=g$, if $E=|P|$ then $E=|P|=0$ and $(M,g)$ is isometric to hyperbolic 3-space.
\end{theorem}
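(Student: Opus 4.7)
The plan is to follow the blueprint of Theorem \ref{thm2}, but replace the asymptotically flat linear model by the Minkowski coordinate function $\ell=-t+\langle\vec a,x\rangle$ restricted to the hyperboloid, which is precisely $v_{\vec a}$. The first step is to produce a spacetime harmonic function $u$ on $M$ asymptotic to $v_{\vec a}$. Because $\ell$ is linear in spacetime, a direct computation on the standard hyperboloid shows that its spacetime Hessian vanishes, so $v_{\vec a}$ is an exact solution of \eqref{spacetimeh} on $(\mathbb{H}^3, b, b)$ and therefore an approximate solution in any asymptotically hyperboloidal end. I would solve the Dirichlet problem $\Delta u+(\mathrm{Tr}_g k)|\nabla u|=0$ on a compact exhaustion $\Omega_r\subset M$ with boundary data $v_{\vec a}|_{\partial\Omega_r}$, using the quasilinearity of the equation and a fixed-point argument on the linear drift Laplacian together with barriers built from $v_{\vec a}\pm\varepsilon$, and pass to a subsequential limit. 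The triviality of $H_2(M;\mathbb{Z})$ prevents the existence of nonseparating sphere level sets which would otherwise generate an unwanted Gauss--Bonnet contribution during the integration step.

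Next I would plug $u$ into the master identity \eqref{master} on a large region $\Omega_r$ and manipulate the integrand using the constraints. Substituting $(\Delta u)^2/|\nabla u|=(\mathrm{Tr}_g k)^2|\nabla u|$ on the left and expanding
\[
|\nabla^2 u|^2=|\bar\nabla^2 u|^2-2|\nabla u|\langle k,\nabla^2 u\rangle-|k|^2|\nabla u|^2
\]
on the right, then integrating the cross term by parts against the Codazzi constraint $\operatorname{div}_g k-d(\mathrm{Tr}_g k)=J$, combined with the Hamiltonian constraint $2\mu=R+(\mathrm{Tr}_g k)^2-|k|^2$ and the estimate $\langle J,\nabla u\rangle\geq -|J|\,|\nabla u|$, should produce the volume term $2(\mu-|J|)|\nabla u|$ together with $|\bar\nabla^2 u|^2/|\nabla u|$ as stated, plus boundary terms on $\partial\Omega_r$. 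The Gauss curvature term $-2K|\nabla u|$ on the interior is reprocessed by co-area and Gauss--Bonnet on the regular level sets $\{u=t\}$, whose topology is controlled by $H_2(M;\mathbb{Z})=0$ together with the one-ended hyperbolic asymptote of $u$, giving $\int_{\{u=t\}}K\, dA_t\le 4\pi$ and an asymptotic $4\pi$ contribution from each regular level at infinity that combines cleanly with the other boundary terms.

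The third and most delicate step is the asymptotic identification. Since $u-v_{\vec a}$ decays in an appropriate weighted Hölder space and $\nabla v_{\vec a}$ is explicitly computable in hyperboloidal coordinates, I would expand the boundary integrand on $S_r\subset M_{end}$ to leading order in $h=\psi_*g-b$ and $p=\psi_*(k-g)$, track each term, and compare with the Chru\'{s}ciel--Herzlich--Jezierski--\L\c eski mass functional $H(V)$ with $V=\sqrt{1+r^2}$ and $V=x^i$. The upshot should be that as $r\to\infty$,
\[
\frac{1}{16\pi}\int_{S_r}\bigl(\text{boundary terms}\bigr)\longrightarrow E+\langle\vec a,P\rangle,
\]
which matches because the conformal Killing field $\nabla_{\mathbb{H}^3}v_{\vec a}$ pairs correctly with $h$ and $2p$ in the mass functional. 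This step is the main obstacle: the hyperbolic mass integrand is considerably more intricate than its ADM counterpart, and one must carefully separate the $O(r^{-q})$ pieces that vanish in the limit from the $O(r^{-q})\cdot O(r)$ pieces that generate the mass, so that the residual $\int_{S_r}H|\nabla u|$ and $\int_{S_r}2K|\nabla u|$ contributions conspire with the Gauss--Bonnet term from interior level sets to produce $E+\langle\vec a,P\rangle$.

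Finally, for rigidity, assume equality under the dominant energy condition. Then $\bar\nabla^2 u\equiv 0$, so $u$ has no critical points and the level sets $\Sigma_t$ foliate $M$ by closed surfaces (forced to be $2$-spheres in the umbilic case $k=g$ by the Gauss--Bonnet analysis at equality, or by horospheres in the general case after pushing the hyperboloidal asymptotic comparison inward). The vanishing of the spacetime Hessian gives $\nabla^2 u=-k|\nabla u|$, which determines $k$ in terms of the geometry of the foliation and forces $g$ to have the explicit Minkowski-hyperboloid form along the flow of $\nabla u/|\nabla u|$. Following the strategy of \cite{HKK, BHKKZ}, one then uses $u$ together with the auxiliary potentials coming from the three choices $\vec a=e_i$ to build an isometric embedding of $(M,g,k)$ into $\mathbb{R}^{3,1}$; in the special case $k=g$ with $E=|P|$ the same analysis collapses the data to the canonical hyperboloid, giving hyperbolic $3$-space as claimed.
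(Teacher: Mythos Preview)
Your outline has the right architecture---existence on an exhaustion, the master identity rewritten with the constraints, a boundary computation converging to $E+\langle\vec a,P\rangle$, then rigidity---but two of the steps are more fragile than you indicate, and the paper in fact does \emph{not} proceed by the direct boundary computation you sketch.

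First, the existence step. Barriers of the form $v_{\vec a}\pm\varepsilon$ will not work: unlike the asymptotically flat case, here $\mathrm{Tr}_g k\sim 3$ so the nonlinearity $(\mathrm{Tr}_g k)|\nabla u|$ is of order one at infinity, and $v_{\vec a}$ is highly anisotropic (it is $O(r)$ in one direction and $O(r^{-1})$ in the opposite). The paper emphasizes that the barriers are delicate for exactly this reason, and the resulting decay one obtains is only $u=v_{\vec a}+O_2(r^{-\tau}|v_{\vec a}|^{\tau})$ with $\tau=\min(3/2,(q+1)/2)$. Crucially, the paper states that this level of decay is \emph{insufficient} to control the boundary integrals in the master identity; one needs a multi-order expansion of the form $u=v_{\vec a}+\tfrac{|v_{\vec a}|^\tau}{r^\tau}\bigl(A+\tfrac{B}{r^2}+O(r^{-4})\bigr)$ with $A,B$ coupled by an elliptic equation on the sphere at infinity. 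Your claim that ``$u-v_{\vec a}$ decays in an appropriate weighted H\"older space'' and that this suffices to expand the boundary integrand is precisely what fails.

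Second, and this is the main divergence, the paper does not attempt the direct asymptotic identification you propose in your third step. It remarks that although the fine expansion above can in principle recover the mass, carrying this out head-on is problematic. Instead the paper introduces an \emph{interpolation} device: deform $(M,g,k)$ to data $(\tilde M,\tilde g,\tilde k)$ that are exactly $(\mathbb H^3,b,b)$ outside a large coordinate sphere, at the cost of violating the dominant energy condition in a transition annulus. On $(\tilde M,\tilde g,\tilde k)$ the spacetime harmonic function has an improved expansion (since the order of asymptotic hyperbolicity is now infinite), and the boundary integrals at infinity vanish. All of the mass is then stored in the bulk integral over the transition annulus, and one shows that as the annulus escapes to infinity this contribution converges to $E+\langle\vec a,P\rangle$. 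This works specifically because inequality \eqref{spinteqh} does not require the dominant energy condition, so the violation in the annulus is harmless. Your direct route may be salvageable with enough effort on the expansion, but the paper's interpolation argument is what actually closes the proof, and it is a genuinely different mechanism from the one you describe.
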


This result is established in \cite{BHKKZ}.
The topological assumption $H_2(M;\mathbb{Z})=0$ should not be considered as necessary to obtain mass lower bound \eqref{spinteqh}. In the general setting where the homology of $M$ is nontrivial, one should pass to an auxiliary initial data set with trivial homology and an end that is isometric (as initial data) to the original $(M,g,k)$. This type of auxiliary data set is an enhanced version of the generalized exterior region used for Theorem \ref{thm2}. The arguments of Theorem \ref{thm3} can then be carried out on this secondary space to obtain the desired result in full generality. It should be pointed out that our argument in the case of equality, under the umbilic assumption $k=g$, does not require the Huang-Jang-Martin result \cite{HJM}, although their theorem does imply the desired conclusion. We also mention the recent result of Jang and Miao \cite{JangMiao} which provides an expression for hyperbolic mass computed via horospheres, in the umbilic case.

Consider now a Riemannian 3-manifold $(M,g)$ augmented by a smooth vector field $\mathcal{E}$ representing an electric field. The triple $(M,g,\mathcal{E})$ will be referred to as \textit{charged asymptotically flat initial data}, if $M$ has the topology of an exterior region with a single asymptotically flat end $M_{end}$ and a finite number of asymptotically cylindrical ends, and $\mathcal{E}\in C^{0,\alpha}_{-q-1}(M_{end})$ with $q>\frac{1}{2}$, $\alpha\in(0,1)$. The total charge is given by
\begin{equation}\label{chargedef}
Q=\lim_{r\rightarrow\infty}\frac{1}{4\pi}\int_{S_r}\langle\mathcal{E},\upsilon\rangle dA,
\end{equation}
where $\upsilon$ is the unit outer normal to coordinate spheres $S_r$ in the asymptotically flat end. Typically $\mathcal{E}$ will be taken to be divergence free, meaning that there is no charge density, and in this case the total charge is finite as it is a homological invariant.
The model charged asymptotically flat initial data are time slices of the Majumdar-Papapetrou spacetime $\left(\mathbb{R}\times\left(\mathbb{R}^3 \setminus \cup_{i=1}^I p_i\right),g_{MP}\right)$ where
\begin{equation}
g_{MP}=-\phi^{-2} dt^2 +\phi^2 \delta,\quad\quad \mathcal{E}_{MP}=\nabla\log \phi,\quad\quad
\phi=1+\sum_{i=1}^{I}\frac{q_i}{r_i},
\end{equation}
with $r_i$ the Euclidean distance to each point $p_i\in\mathbb{R}^3$. Each such point represents a degenerate black hole, in the sense that the horizon is not present within the initial data but rather lies at the bottom of the associated asymptotically cylindrical end. The constants $q_i >0$ give the charge and mass of each black hole, and the total charge as well as the total mass agrees with $\sum_{i=1}^{I}q_i$. In order to establish a version of the positive mass theorem with charge, we utilize \eqref{master} with functions $u$ satisfying the equation
\begin{equation}\label{fjguru}
\Delta u-\langle\mathcal{E},\nabla u\rangle=0.
\end{equation}
Solutions to this equation will be referred to as \textit{charged harmonic functions}, and the
associated \textit{charged Hessian} is given by
\begin{equation}
\hat{\nabla}_{ij} u=\nabla_{ij}u+\mathcal{E}_i u_j +\mathcal{E}_j u_i -\langle\mathcal{E},\nabla u\rangle g_{ij},
\end{equation}
where $u_i$ denote partial derivatives. Observe that the charged (drift) Laplacian in \eqref{fjguru} arises from a trace of the charged Hessian.

\begin{theorem}\label{thm4}
Let $(M,g,\mathcal{E})$ be a smooth complete charged asymptotically flat initial data set, having divergence free electric field, mass $m$, and charge $Q$. There exists a charged harmonic function $u$ on $M$ which is asymptotic to an asymptotically flat coordinate function in $M_{end}$, and remains bounded along asymptotically cylindrical ends, such that
\begin{equation}\label{chargedlower}
m-|Q|\geq\frac{1}{16\pi}\int_M\left(\frac{|\hat\nabla^2u|^2}{|\nabla u|}+(R-2|E|^2)|\nabla u|\right)dV.
\end{equation}
Consequently, if the charged dominant energy condition is satisfied $R\geq 2|E|^2$, then $m\geq |Q|$. Moreover, $m=|Q|$ if and only if $(M,g,\mathcal{E})$ is isometric (as charged initial data) to the time slice of a Majumdar-Papapetrou spacetime.
\end{theorem}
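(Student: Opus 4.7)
The strategy is to follow the template of Theorem~\ref{thm1} (the $\mathcal{E}=0$ case), replacing the usual Hessian by the charged Hessian $\hat{\nabla}^{2}u$ and absorbing the new first-order $\mathcal{E}$-terms via integration by parts, exploiting $\operatorname{div}\mathcal{E}=0$. The argument splits into (a) constructing a charged harmonic function~$u$ with the stated asymptotic behavior, (b) deriving a Bochner-type identity featuring $\hat{\nabla}^{2}u$, and (c) matching the boundary terms in the asymptotic limit with $m$ and $Q$.

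For stage~(a), the drift operator $Lu:=\Delta u-\langle\mathcal{E},\nabla u\rangle$ is a compact lower-order perturbation of~$\Delta$. On the asymptotically flat end, the weighted Schauder/Fredholm framework of Bartnik and Lockhart-McOwen shows $L:C^{l,\alpha}_{1-q}\to C^{l-2,\alpha}_{-1-q}$ is Fredholm of index zero, so after subtracting a cutoff of an asymptotically flat coordinate function one reduces to solving $Lv=f$ in a decaying space. Each asymptotically cylindrical end is handled by indicial-root analysis: since $\mathcal{E}$ is small near each cylinder, the indicial operator is a small perturbation of the cylindrical Laplacian whose zero root corresponds to constants, producing bounded solutions tending to constants on each cylindrical end. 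A cokernel argument for the formal adjoint (again relying on $\operatorname{div}\mathcal{E}=0$) completes global solvability.

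For~(b), substituting $\Delta u=\langle\mathcal{E},\nabla u\rangle$ into~\eqref{master} and expanding directly from the definition of $\hat{\nabla}_{ij}u$ produces the algebraic identity
\begin{equation*}
|\nabla^{2}u|^{2}=|\hat{\nabla}^{2}u|^{2}-4|\nabla u|\langle\mathcal{E},\nabla|\nabla u|\rangle+\langle\mathcal{E},\nabla u\rangle^{2}-2|\mathcal{E}|^{2}|\nabla u|^{2},
\end{equation*}
in which the two copies of $\langle\mathcal{E},\nabla u\rangle^{2}/|\nabla u|$ cancel. The remaining first-order $\mathcal{E}$-term is moved to the boundary by $\operatorname{div}\mathcal{E}=0$, contributing $4\int_{\partial}|\nabla u|\langle\mathcal{E},\nu\rangle\,dA$. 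Combining this with the level-set/Gauss-Bonnet analysis of~\cite{BKKS}, which handles the $-2K|\nabla u|$ interior term and converts the mean curvature boundary integral at infinity into $16\pi m$, and exhausting~$M$ by coordinate balls in~$M_{end}$ together with truncations along the cylindrical ends (whose contributions vanish because $|\nabla u|\to 0$ where $u$ is constant), one obtains in the limit
\begin{equation*}
16\pi m+4\lim_{R\to\infty}\int_{S_{R}}\langle\mathcal{E},\nu\rangle\,dA\;\geq\;\int_{M}\left(\frac{|\hat{\nabla}^{2}u|^{2}}{|\nabla u|}+(R-2|\mathcal{E}|^{2})|\nabla u|\right)dV.
\end{equation*}
By~\eqref{chargedef} this reads $m+Q\geq\cdots$; the analogous argument for the sign-flipped equation $\Delta u+\langle\mathcal{E},\nabla u\rangle=0$ (with correspondingly flipped charged Hessian) yields $m-Q\geq\cdots$, and selecting whichever sign is opposite to $\operatorname{sign}(Q)$ produces \eqref{chargedlower}.

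For rigidity, equality forces $\hat{\nabla}^{2}u\equiv 0$ and $R=2|\mathcal{E}|^{2}$. Contracting $\hat{\nabla}_{ij}u=0$ against $u^{j}$ gives $\nabla|\nabla u|^{2}=-2|\nabla u|^{2}\mathcal{E}$, so $\mathcal{E}=-\nabla\log|\nabla u|$ globally; smoothness of $\mathcal{E}$ then forces $|\nabla u|>0$ throughout~$M$. Setting $\phi:=1/|\nabla u|$ one has $\mathcal{E}=\nabla\log\phi$, and $\hat{\nabla}^{2}u$ coincides with the Hessian of~$u$ with respect to the conformal metric $\tilde{g}=\phi^{-2}g$, so $\nabla u$ is $\tilde{g}$-parallel of unit $\tilde{g}$-length. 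The conformal scaling law together with $R=2|\mathcal{E}|^{2}$ and $\Delta\log\phi=\operatorname{div}\mathcal{E}=0$ yields $R(\tilde{g})\equiv 0$, so the de~Rham splitting identifies $(M,\tilde{g})\cong(\mathbb{R}\times N^{2}, d\xi^{2}+\tilde{g}_{N})$ with $N$ scalar-flat and therefore flat; matching flat-end asymptotics pins $(M,\tilde{g})\cong(\mathbb{R}^{3},\delta)$, whence $g=\phi^{2}\delta$ with~$\phi$ harmonic on $\mathbb{R}^{3}$ punctured at the cylindrical ends and asymptotic to $1+\sum q_{i}/r_{i}$—precisely the Majumdar-Papapetrou form. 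The main obstacle I anticipate is stage~(a): constructing the charged harmonic function on a mixed asymptotically flat/asymptotically cylindrical manifold requires carefully coordinating weighted Fredholm theory on both types of ends with a non-self-adjoint drift term, and ensuring the allowed constant-mode behavior at each cylindrical end is compatible with the prescribed asymptote at the flat end.
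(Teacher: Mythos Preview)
Your integral identity in stage~(b) is correct and matches the paper's, and your observation in the rigidity step that contracting $\hat{\nabla}_{ij}u=0$ with $u^{j}$ yields $\mathcal{E}=-\nabla\log|\nabla u|$ directly is a genuinely nice shortcut: the paper instead uses three charged harmonic functions $u^{x},u^{y},u^{z}$, proves their gradients are everywhere linearly independent via an ODE along geodesics, and extracts the Ricci identity $R_{ij}-|\mathcal{E}|^{2}g_{ij}+\mathcal{E}_{i}\mathcal{E}_{j}+\nabla_{i}\mathcal{E}_{j}=0$ before concluding $\mathcal{E}$ is exact. Your route reaches the same conformal factor with one function.

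However, there is a real gap in stage~(c). When you invoke the level-set/Gauss--Bonnet analysis of \cite{BKKS} as a black box, you are implicitly using that every regular level set $\Sigma_{t}$ satisfies $\chi(\Sigma_{t})\leq 1$, i.e.\ has no spherical components. But here $M\cong\mathbb{R}^{3}\setminus\{p_{1},\dots,p_{I}\}$ has $H_{2}(M;\mathbb{Z})\neq 0$, and a compact level-set component enclosing some $p_{i}$ is \emph{not} ruled out by the maximum principle (the region it bounds contains a cylindrical end, so $u$ is not forced to be constant there). Such spheres add $+2$ to $\chi(\Sigma_{t})$ and spoil the inequality. The paper resolves this by \emph{capping off} each cylindrical end with a 3-ball $\Omega_{\epsilon}$ to produce $\tilde{M}_{\infty,\epsilon}\cong\mathbb{R}^{3}$, extending $\mathcal{E}$ by zero across $S_{\epsilon}$, and solving the charged Laplace equation on this topologically trivial manifold; now the maximum principle does forbid compact level-set components. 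The price is that $u_{\epsilon}$ is only $C^{1,\alpha}$ across $S_{\epsilon}$ and the caps violate the charged dominant energy condition, but one shows $\sup_{\Omega_{\epsilon}}|\nabla u_{\epsilon}|\to 0$ (via separation of variables on the cylinder), so both the cap bulk integral and the $S_{\epsilon}$ boundary flux vanish in the limit. Your Fredholm/indicial-root construction of $u$ directly on $M$ is plausible, but it does not by itself give the topological control needed for the mass formula.

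There is also a smaller gap in your rigidity argument: the de~Rham splitting $(M,\tilde{g})\cong\mathbb{R}\times N^{2}$ requires completeness, and $(M,\tilde{g})$ with $\tilde{g}=|\nabla u|^{2}g$ is incomplete at each puncture (since $|\nabla u|\to 0$ exponentially there). The paper handles this by showing each cylindrical end becomes, in the conformal metric, a flat cone whose singularity is removable by \cite{SmithYang}, and rules out the ``opening'' case by a mass argument; only then does one conclude $(M,\tilde{g})$ extends to flat $\mathbb{R}^{3}$. Your line ``matching flat-end asymptotics pins $(M,\tilde{g})\cong(\mathbb{R}^{3},\delta)$'' skips exactly this step.
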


Physically, the charged dominant energy condition hypothesis may be interpreted as stating that the non-electromagnetic matter fields satisfy the dominant energy condition. The inequality $m\geq|Q|$, known as the positive mass theorem with charge, was first established by Gibbons, Hawking, Horowitz, and Perry \cite{GHHP} using spinorial techniques (see also \cite{BartnikChrusciel}). Their result allowed for the inclusion of extrinsic curvature $k$, but did not allow asymptotically cylindrical ends and thus could not treat the case of equality. Novel features of Theorem \ref{thm4} include the lower bound \eqref{chargedlower} for the difference $m-|Q|$, which does not rely on an energy condition, as well as a new approach to the rigidity statement which does not appear to be fully resolved in all cases \cite{CRT}. Other related results may be found in \cite[Theorem 2.1]{AKY}, \cite[Theorem 2]{Jaracz}, and \cite[Theorem 2]{KhuriWeinstein}.

Theorem \ref{thm4} will be established in Section \ref{S: charge} below. A discussion of the proofs of Theorems \ref{thm2} and \ref{thm3} are given in Sections \ref{S: spacetime PMT} and \ref{S: hyperbolic PMT}, respectively. The new concept of mass obtained through interpolation with model geometries is also given in Section \ref{S: hyperbolic PMT}.
In Section \ref{S: master} an outline of the proof of identity \eqref{master} is provided, and it is shown how the inverse mean curvature flow approach \cite{HI} to the Riemannian Penrose inequality may be placed within the context of the level set methods presented here.  Lastly, motivation for the spacetime harmonic function equation as well as connections to other equations are examined in Section \ref{S: spacetime harmonic functions}, while open questions are proposed in Section \ref{S: questions}.

\section{The Level Set Formula}
\label{S: master}
\setcounter{equation}{0}
\setcounter{section}{4}

The purpose of this section is to outline the proof of the main identity \eqref{master}. For simplicity, throughout this section it will be assumed that $|\nabla u|\neq 0$. This restriction is not necessary, however it will allow us to display the essence of the argument without including certain technical details. In order to obtain the general result, $|\nabla u|$ should be replaced with $\sqrt{|\nabla u|^2 +\varepsilon}$ in what follows, and then $\varepsilon$ taken to zero after an appropriate application of Sard's theorem and a Kato inequality. We refer the reader to \cite[Section 3]{HKK} for details.

\subsection{Primary identity}

First recall Bochner's identity
\begin{equation}
\frac12\Delta|\nabla u|^2=|\nabla^2u|^2+\Ric(\nabla u,\nabla u)+\langle \nabla u,\nabla \Delta u\rangle,
\end{equation}
and note that this implies
\begin{equation}
\Delta |\nabla u|=\frac1{|\nabla u|}\left(|\nabla^2u|^2+\Ric(\nabla u,\nabla u)+\langle \nabla u,\nabla \Delta u\rangle-|\nabla |\nabla u||^2\right).
\end{equation}
Consider now the $t$-level set $\Sigma_t$ of $u$. The unit normal to this surface is given by $\nu=\frac{\nabla u}{|\nabla u|}$, and the corresponding second fundamental form and mean curvature respectively take the form
\begin{equation}\label{Ah}
A_{ij}=\frac{\nabla_{ij}u}{|\nabla u|},\quad\quad\quad H=\frac1{|\nabla u|}\left(\Delta u-\nabla_{\nu\nu}u\right).
\end{equation}
These formulas imply that
\begin{equation}
|\nabla u|^2 \left(H^2-|A|^2 \right)=2|\nabla|\nabla u||^2-|\nabla^2 u|^2+\left(\Delta u \right)^2-2\Delta u\nabla_{\nu\nu}u.
\end{equation}
Combining this with two traces of the Gauss equations
\begin{equation}
\Ric(\nu,\nu)=\frac12(R-2K+H^2-|A|^2),
\end{equation}
produces
\begin{equation}
\Delta |\nabla u|=\frac1{2|\nabla u|}\left(|\nabla^2u|^2+\left(R-2K \right)|\nabla u|^2+2\langle \nabla u,\nabla\Delta u\rangle+\left(\Delta u\right)^2-2\Delta u\nabla_{\nu\nu}u \right).
\end{equation}
Next observe that
\begin{equation}
\frac{\langle\nabla u,\nabla\Delta u\rangle}{|\nabla u|}
-\frac{\Delta u}{|\nabla u|}\nabla_{\nu\nu}u
=\mathrm{div}\left(\Delta u \frac{\nabla u}{|\nabla u|}\right)
-\frac{\left(\Delta u\right)^2}{|\nabla u|},
\end{equation}
and therefore
\begin{equation}\label{7890}
\div\left(\nabla |\nabla u|-\Delta u\frac{\nabla u}{|\nabla u|}\right)=\frac1{2|\nabla u|}\left(|\nabla^2u|^2+\left(R-2K \right)|\nabla u|^2 -\left(\Delta u\right)^2 \right).
\end{equation}
Integrating this over the manifold $M$ whose boundary is a level set of $u$, yields the desired result
\begin{equation}
\int_{M}\frac{\left(\Delta u\right)^2}{|\nabla u|}dV-\int_{\partial M}2H|\nabla u|dA=\int_M \left(\frac{|\nabla^2 u|^2}{|\nabla u|}+(R-2K)|\nabla u|\right)dV,
\end{equation}
where the mean curvature in the boundary integral is with respect to the unit outer normal.
It is interesting to note the vague similarities between equation \eqref{7890}
and \cite[Lemma 3.2]{KhuriXie}, in particular when $u$ is harmonic.

\subsection{Relation to Geroch monotonicity}

The Penrose inequality \cite{Mars} is a conjectural relation between the mass and horizon area for initial data satisfying the dominant energy condition, and is motivated by heuristic physical arguments tying it to the grand cosmic censorship conjecture \cite{Penrose}. In the asymptotically flat time symmetric case $k=0$, assuming nonnegative scalar curvature, the relation states that
\begin{equation}
m\geq \sqrt{\frac{\mathcal{A}}{16\pi}}
\end{equation}
where $\mathcal{A}$ denotes the area of the outermost minimal surface with respect to a particular end. Moreover, equality is achieved only for time slices of the Schwarzschild spacetime. These statements were established by Huisken and Ilmanen \cite{HI} for a single black hole, and by Bray \cite{Bray1} for multiple black holes; Bray and Lee \cite{BrayLee} extended this to dimensions $n\leq 7$.  The proof by Huisken and Ilmanen relies on monotonicity of the Hawking mass along inverse mean curvature flow. A level set characterization of the flow was used to overcome singular behavior. A strategy to generalize these strategies to the non-time symmetric setting may be found in \cite{BrayKhuri1,BrayKhuri,HanKhuri}.

It is natural to expect some connection between the level set approach for the Penrose inequality, and those that are used to study the positive mass theorem. Indeed,
consider a smooth `weight function' $f=f(u)$ defined on the levels of $u$, and suppose that the maximum and minimum levels are $\min_Mu=T_1$ and $\max_Mu=T_2$, which are achieved on the level set boundary of $M$. Then multiplying \eqref{7890} by $f$ and integrating by parts produces
\begin{align}\label{weights}
\begin{split}
&-\int_{\Sigma_{T_2}}2H|\nabla u|fdA+\int_{\Sigma_{T_1}}2H|\nabla u|fdA\\
=&\int_{T_1}^{T_2}f\int_{\Sigma_t}\left[H^2+|A|^2+R+2\frac{|\nabla|_{\Sigma_t} \nu(u)|^2}{|\nabla u|^2}-2H\frac{\Delta u}{|\nabla u|} \right]dAdt\\
&-\int_{T_1}^{T_2}4\pi f\chi(\Sigma_t)dt-2\int_{T_1}^{T_2} f'\int_{\Sigma_t}H|\nabla u|dAdt,
\end{split}
\end{align}
where $\chi(\Sigma_t)$ is the Euler characteristic of the $t$-level set. This expression is obtained with the help of the coarea formula, the Gauss-Bonnet theorem, and \eqref{Ah}. Now choose $u$ to solve the level set formulation of inverse mean curvature flow equation, and set the weight function as follows
\begin{equation}
\Delta u=|\nabla u|^2
+\frac{\langle \nabla u,\nabla|\nabla u|\rangle}{|\nabla u|},\quad\quad\quad
f(t)=\frac{e^{(t-T_1)/2}}{16\pi}\sqrt{\frac{|\Sigma_{T_1}|}{16\pi}}
=\frac{1}{16\pi}\sqrt{\frac{|\Sigma_{t}|}{16\pi}}.
\end{equation}
Inserting this into \eqref{weights}, integrating by parts once more, and using $H=|\nabla u|$ yields
\begin{align}
\begin{split}
&m_H(\Sigma_{T_2})-m_H(\Sigma_{T_1})\\
=&\int_{T_1}^{T_2}\sqrt{\frac{|\Sigma_t|}{16\pi}}\left[\frac12-\frac14\chi(\Sigma_t)+\frac1{16\pi}\int_{\Sigma_t}\left(\frac{2|\nabla _{\Sigma_t} H|^2}{H^2}+R+|A|^2-\frac12H^2\right)dA\right]dt,
\end{split}
\end{align}
where
\begin{equation}
m_H(\Sigma_t)=\sqrt{\frac{|\Sigma_t|}{16\pi}}\left(1-\frac1{16\pi}\int_{\Sigma_t}H^2dA \right)
\end{equation}
is the Hawking mass. If the level sets remain connected, which is valid in exterior regions \cite[Lemma 4.2]{HI}, then $\chi(\Sigma_t)\leq 2$ and the Hawking mass is nondecreasing. This is Geroch monotonicity \cite{Geroch}, which leads to a proof of the Penrose inequality for a single black hole. A version of the above arguments holds for the Penrose inequality with charge \cite{HI,Jang,KWY,McCormick} using the charged Hawking mass \cite{DK}.

\section{Spacetime Harmonic Functions}
\label{S: spacetime harmonic functions}
\setcounter{equation}{0}
\setcounter{section}{5}

Consider an asymptotically flat 4-dimensional spacetime $(\mathbf{M}^4,\mathbf{g})$ having initial data set $(M,g,k)$. The positive mass theorem in the time-symmetric case, Theorem \ref{thm1}, was proven \cite{BKKS} using the level sets of asymptotically linear harmonic functions on $(M,g)$. We are led to the analogue in the spacetime setting by taking into account intuition from the case of equality. More precisely, when the mass vanishes the initial data arise from Minkowski space $\mathbb{M}^4$, and in the time symmetric case this hypersurface is a constant time slice with the relevant harmonic functions given by $a_i x^i$ where $a_i$, $i=1,2,3$ are constants; here $x^i$ are the standard cartesian coordinates parameterizing Minkowski space, with $i=1,2,3$ representing spatial indices and $i=0$ representing the time index. For nonconstant time slices, the most natural generalization is to utilize linear functions $\pmb{\ell}=a_0 x^0+a_i x^i$ of all the coordinates in $\mathbb{M}^4$. The level set foliation within the initial data $(M,g,k)$ is then the intersection of these hyperplanes with the slice. In other words, the relevant function is the restriction of the spacetime linear combination to $M$. The issue is then to determine a canonical equation induced on the slice that is satisfied by this function.

To find the appropriate equation, let $\nabla$ and $\pmb{\nabla}$ be the Levi-Civita connections of the slice and spacetime, respectively. Linear functions of the coordinates in Minkowski space have vanishing spacetime Hessian, and thus when restricted to $M$ they must lie in the kernel of the hypersurface spacetime Laplacian $\mathbf{\Delta}=g^{ij}\pmb{\nabla}_{ij}$. This suggests that in a general spacetime, we should  consider functions $\mathbf{u}\in C^{2}(\mathbf{M}^4)$ that satisfy
\begin{equation}\label{09876565}
0=\mathbf{\Delta}\mathbf{u}=g^{ij}\left(\nabla_{ij}\mathbf{u}-k_{ij} n(\mathbf{u})\right)
=\Delta\mathbf{u}-\left(\mathrm{Tr}_g k\right)n(\mathbf{u})\quad\text{ on }\quad M,
\end{equation}
where the unit timelike normal to the slice is denoted by $n$.
Observe that $\mathbf{\Delta}\mathbf{u}$ may be considered as the divergence of $\pmb{\nabla}\mathbf{u}|_M$, using the ambient connection $\pmb{\nabla}$ acting on sections of the restricted bundle $T\mathbf{M}^4 |_{M}$. This is similar to the type of harmonic spinor employed by Witten \cite{Witten}, where the hypersurface Dirac operator is defined by the induced connection from $\mathbf{M}^4$. The equation \eqref{09876565} is not sufficient, however, because it still depends on the spacetime nature of the function, instead of being solely determined by the restriction $u=\mathbf{u}|_{M}$. In order to rectify this problem, a choice for $n(\mathbf{u})$ must be made in terms of intrinsic quantities associated with the data. In this pursuit we are guided by the lower bound for mass that arises from the identity \eqref{master}. It turns out that the desired choice is for the spacetime gradient $\pmb{\nabla}\mathbf{u}$ to be null, more precisely $n(\mathbf{u})=-|\nabla u|$. Going back to the intuition from Minkowski space, we see that the level sets of the relevant function $u$ consist of the intersection of light cones with the slice. With these considerations, we are then motivated to define the notion of a \textit{spacetime harmonic function} to be $u\in C^{2}(M)$ solving the \textit{spacetime Laplace} equation
\begin{equation}
\bar{\Delta} u=\Delta u+\left(\mathrm{Tr}_g k\right)|\nabla u|=0,
\end{equation}
which arises from a trace of the \textit{spacetime Hessian}
\begin{equation}
\bar{\nabla}_{ij}u=\nabla_{ij}u+k_{ij}|\nabla u|.
\end{equation}

\subsection{Association with the Jang equation}
\label{jang}

Let us generalize the derivation of the spacetime Laplacian. Recall that at a certain point a choice was made for the normal derivative $n(\mathbf{u})$.  There are other natural choices that can be made besides dictating that $\pmb{\nabla}\mathbf{u}$ be null. For instance, one possibility is to take
\begin{equation}\label{hyui}
n(\mathbf{u})=-\sqrt{a+|\nabla u|^2}
\end{equation}
for some constant $a\in \mathbb{R}$, assuming that the quantity inside the square root is nonnegative. Then the \textit{generalized spacetime harmonic function} equation becomes
\begin{equation}\label{aldkjf}
\Delta u+\left(\mathrm{Tr}_g k\right)\sqrt{a+|\nabla u|^2}=0.
\end{equation}
To gain intuition concerning the level sets of solutions, consider again the example of initial data $(M,g,k)$ for Minkowski space. The linear function $\pmb{\ell}$, by virtue of having a vanishing Hessian, will satisfy the generalized spacetime harmonic function equation as long as the coefficients $a_i$ are chosen to satisfy \eqref{hyui}. Since
\begin{equation}
-a_0^2 +\sum_{i=1}^3 a_i^2 =|\pmb{\nabla}\pmb{\ell}|^2 =-n(\pmb{\ell})^2 +|\nabla \pmb{\ell}|^2,
\end{equation}
we find that \eqref{hyui} is satisfied whenever
\begin{equation}
a=a_0^2 -\sum_{i=1}^3 a_i^2.
\end{equation}
It follows that the hyperplanes $\pmb{\ell}=const$ are timelike when $a>0$, spacelike when $a<0$, and null when $a=0$. Level sets of the generalized spacetime harmonic function, arising from the restriction of $\pmb{\ell}$ to the slice, are obtained by intersecting $M$ with these hyperplanes. Thus, we find that the foliation will always be smooth when $a\geq 0$, as $M$ is a spacelike hypersurface. Based on these observations, it is reasonable to call solutions of \eqref{aldkjf} \textit{timelike, spacelike, or null spacetime harmonic functions} depending on whether $a>0$, $a<0$, or $a=0$ respectively. Note that such null spacetime harmonic functions agree with the original definition of spacetime harmonic functions given at the beginning of the section.

As discussed in Section \ref{results}, null spacetime harmonic functions play an important role in a proof of the spacetime positive mass theorem. It should also be pointed out that timelike spacetime harmonic functions, in particular with $a=1$, have previously been derived in a different context and applied to another type of geometric inequality involving mass. Namely, in \cite{ChaKhuri1} (see also \cite{ChaKhuri2}) the authors generalize the Jang equation, utilized in the Schoen and Yau proof of the spacetime positive mass theorem \cite{SY2}, to be suitable for application to mass-angular momentum inequalities. In short, the Jang equation was generalized to include a lapse and shift, so that it can be used to recognize initial data from stationary spacetimes, such as Kerr. In this setting the initial data are taken to be axisymmetric, however as noted in Theorem 2.3 of \cite{ChaKhuri1} if the quantity $Y\equiv 0$ (vanishing shift) then the relevant conclusions hold without the symmetry hypothesis. If in addition the lapse is taken to be 1, the generalized Jang equation \cite[(2.37)]{ChaKhuri1} reduces to
\begin{equation}\label{00000}
\Delta f-\left(\mathrm{Tr}_g k\right)\sqrt{1+|\nabla f|^2}=0.
\end{equation}
This may be recognized as the generalized spacetime harmonic function equation with $a=1$, by replacing $k$ with $-k$. Note that the sign associated with $k$ is immaterial as it simply represents the direction (future or past) of the timelike normal used to compute the extrinsic curvature. This equation may be viewed in yet another way. Recall that the classical Jang equation was derived by taking the trace of the difference of second fundamental forms, namely, the second fundamental form $\left(1+|\nabla f|^2\right)^{-1/2}\nabla_{ij}f$ of the graph $t=f(x)$ in the product manifold $(\mathbb{R}\times M, dt^2 +g)$ and the given extrinsic curvature $k_{ij}$. Traditionally the trace was taken with respect to the induced metric $\bar{g}=g+df^2$ on the graph, whereas in \eqref{00000} the trace is taken with respect to $g$.

\subsection{Association with harmonic spinors}
\label{SS: spinor motivation}

Here we would like to draw a comparison between spacetime harmonic functions and spinorial approaches to mass is general relativity. To this end, consider the following characterization of the spacetime Hessian. Let $n$ denote a normal covector field to $M$ of length $-1$, and define a map
$T^*M\to T^* \mathbf{M}^4|_{M}$ by sending a covector $\alpha$ to the null covector $I(\alpha):=\alpha+|\alpha| n$. As above, the Levi-Civita connection on $\mathbf{M}^4$ will be denoted by $\pmb{\nabla}$. Then the tangential components of $\pmb{\nabla}I(du)$ of a spatial function $u$ coincide with the spacetime Hessian $\bar{\nabla}^2 u$.

In spinor-based approaches to the mass of 3-dimensional initial data sets \cite{ParkerTaubes,Witten}, one considers the bundle $\mathbb{S}$ of $SL_2(\mathbb{C})$ (Weyl) spinors associated to the restricted bundle $T\mathbf{M}^4|_{M}$. The spinor bundle $\mathbb{S}$ inherits two connections: an intrinsic connection $\nabla$ and one induced by $(\mathbf{M}^4,\mathbf{g})$. Using the later connection, one constructs the so-called Witten-Dirac operator $\bar{\slashed{\partial}}$. To analyze the mass of $(M,g,k)$, spinors solving $\bar{\slashed{\partial}}\psi=0$ and converging to a constant spinor at spatial infinity are studied. Clearly, there is an analogy one can make between this approach and the techniques described in the present paper -- the spinor $\psi$ corresponds to the 1-form $I(du)$, and the Witten-Dirac harmonic equation corresponds to the spacetime harmonic equation.

To make this relationship more concrete, observe that a given Weyl spinor $\psi$ can be `squared' to a null 1-form $\alpha$ defined by
\begin{equation}
\alpha(X)=\mathrm{Im}(X\cdot\psi,\psi)
\end{equation}
for any vector $X\in T\mathbf{M}^4$, where $(\;,\;)$ denotes the Hermitian pairing on $\mathbb{S}$ and $\cdot$ represents Clifford multiplication. A direct calculation reveals
\begin{equation}
\mathrm{Tr}_g(\pmb{\nabla} \alpha)=\mathrm{Im}(\bar{\slashed{\partial}}\psi,\psi).
\end{equation}
In particular, if $\bar{\slashed{\partial}}\psi=0$ and $\alpha=I(du)$ for a spatial  function $u$, then $u$ is spacetime harmonic. Informally, one can interpret a spacetime harmonic function as the integral of the square of a harmonic spinor. It is interesting to note that the level sets of $u$ are obscured from the perspective of $\psi$, yet they play a central role in our methods.

\section{The Spacetime Positive Mass Theorem}
\label{S: spacetime PMT}
\setcounter{equation}{0}
\setcounter{section}{6}

Consider the existence problem for asymptotically linear spacetime harmonic functions on asymptotically flat initial data. Although this is a nonlinear equation, the nonlinearity is relatively mild as it appears only in the first derivatives and is homogeneous of degree 1. For instance, the maximum principle still applies to the spacetime Laplace equation. Thus, we expect traditional existence results, mimicking those for linear equations with vanishing zeroth order term, but with a cap on the amount of regularity in general.

Let $(M,g,k)$ denote a smooth complete asymptotically flat initial data set with
boundary $\partial M$ (which may be empty), having a single end; the case of multiple ends may be treated similarly. Given a linear function $a_i x^i$ defined in the asymptotic end, with $\sum_{i}a_i^2 =1$, it is convenient to first construct an approximate solution in the asymptotic end that extends to all of $M$. More precisely,
by slightly generalizing \cite[Theorem 3.1]{Bartnik} we may solve the asymptotically linear Dirichlet problem for Poisson's equation
\begin{equation}
\Delta v=-\mathrm{Tr}_g k\quad\quad\text{ on }\quad\quad M,
\end{equation}
\begin{equation}
v=0\quad\text{ on }\quad \partial M,\quad\quad\quad v=a_i x^i +O_2(r^{1-q})\quad\text{ as }\quad r\rightarrow\infty,
\end{equation}
where $q>\frac{1}{2}$ is as in \eqref{AF}, $r=|x|$, and $O_2$ indicates in the usual way additional fall-off for each derivative taken up to order 2. Next let $M_r$ be a sequence of exhausting domains as $r\rightarrow\infty$, in which each member of the sequence consists of all points inside a large coordinate sphere $S_r$ in the asymptotic end. The Dirchlet problem for the spacetime harmonic function equation may be solved in $M_r$ via the Schauder fixed point theorem, with the solution $u_r$ prescribed to be $v$ on $S_r$ and a given fixed smooth function $c$ on $\partial M$. Uniform estimates for the difference $u_r -v$ may then be obtained by constructing a barrier function with the appropriate decay. By then passing to a limit, the desired solution $u\in C^{2,\alpha}(M)$ is found \cite[Section 4]{HKK} for the boundary value problem
\begin{equation}\label{7658}
\Delta u+\left(\mathrm{Tr}_g k\right)|\nabla u|=0\quad\quad\text{ on }\quad\quad  M,
\end{equation}
\begin{equation}\label{76890}
u=c\quad\text{ on }\quad \partial M,\quad\quad\quad\quad u=v +O_2(r^{1-2q})\quad\text{ as }\quad r\rightarrow\infty.
\end{equation}

In the next stage of the argument, the solution to \eqref{7658}, \eqref{76890} is taken on a generalized exterior region $M_{ext}$, with the inner boundary condition $c$ chosen to be appropriate constants on each component of the apparent horizon boundary. This solution is then inserted into the primary identity \eqref{master}, or rather a slightly expanded version of it \cite[Proposition 3.2]{HKK}, on the exhausting domain $M_r \subset M_{ext}$ to produce
\begin{equation}\label{2222}
\int_{\partial_{\neq 0} M_{r}}\left(\partial_{\upsilon}|\nabla u|+k(\nabla u,\upsilon)\right)dA\geq\int_{\underline{u}}^{\overline{u}}\int_{\Sigma_t}
\left(\frac{1}{2}\frac{|\bar{\nabla}^2 u|^2}{|\nabla u|^2}+\mu+J\left(\tfrac{\nabla u}{|\nabla u|}\right)-K\right)dA dt,
\end{equation}
where $\partial_{\neq 0}M_{r}$ is the open subset of $\partial M_r$ on which $|\nabla u|\neq 0$, $\upsilon$ is the unit outer normal, $\overline{u}$ and $\underline{u}$ denote the maximum and minimum values of $u$, and $\Sigma_t$ are $t$-level sets. The Gauss-Bonnet theorem may be used to replace the Gauss curvature $K$ with the Euler characteristic of level sets, and the geodesic curvature of the curves obtained by intersecting $\Sigma_t$ with the outer boundary $S_r$. Note that the inner boundary does not contribute a geodesic curvature term, since these boundary components are level sets of $u$. The outer boundary integral of \eqref{2222}, together with the Euler characteristic and geodesic curvature contributions, converges to the 4-momentum expression $8\pi\left(E+\langle\vec{a},P\rangle\right)$ as long as the Euler characteristics are not larger than 1. It should be noted that the original computation is carried out for coordinate cubes in the asymptotically flat end, although the corresponding limit agrees with the computation computed on coordinate spheres.
This Euler characteristic condition requires that there are no spherical level sets, and this follows from the strong maximum principle together with the topological condition $H_2(M_{ext},\partial M_{ext};\mathbb{Z})=0$. More precisely, the constants $c$ prescribed on the boundary are chosen in a particular manner to obtain two properties \cite[Lemma 5.1]{HKK}: 1) there exists a point $p$ in each boundary component such that $|\nabla u(p)|=0$, and 2) on each MOTS (MITS) boundary component $\partial_{\upsilon}u \leq (\geq) 0$. See Figure \ref{pic1}.

\begin{figure}[H]
\centering
\begin{picture}(0,0)
\put(-35,70){\large{$\partial M_{ext}$}}
\put(0,74){\vector(1,0){73}}
\put(230,120){{\color{blue}\large{$u^{-1}(c)$}}}
\put(230,75){{\color{red}\large{$u^{-1}(0)$}}}
\put(230,25){{\color{brass}\large{$u^{-1}(-1)$}}}
\end{picture}
\includegraphics[scale=.4]{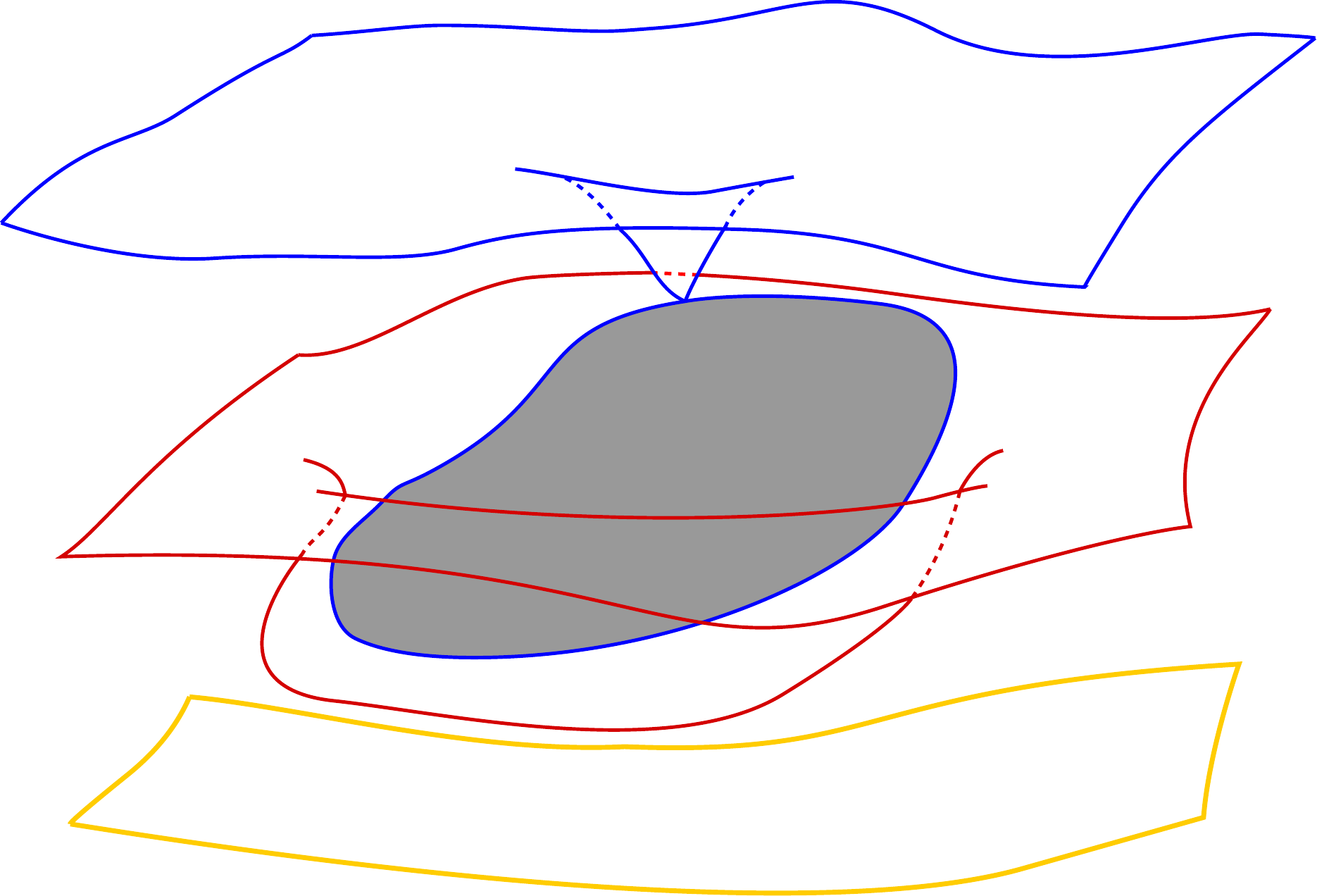}
\caption{Possible level sets of the spacetime harmonic function $u$ near components of the boundary $\partial M_{ext}$.}\label{pic1}
\end{figure}

\noindent The first property implies, with help from the strong maximum principle, that no regular level sets of $u$ can be homologous to any part of the boundary, the second property guarantees that the inner boundary integral relates to the MOTS and MITS condition
\begin{equation}
\int_{\partial_{\neq 0}M_{ext}}\left(\partial_{\upsilon}|\nabla u|+k(\nabla u,\upsilon)\right)dA
=\sum_{j} \int_{\partial_j M_{ext}}\theta_{\pm}|\upsilon(u)|dA,
\end{equation}
where the sum is taken over the number of boundary components $\partial_j M_{ext}$, and the notation $\theta_{\pm}$ above indicates that the integrand contains $\theta_+$ for a MOTS
component and $\theta_-$ for a MITS component. It follows that the boundary integral vanishes, and the desired mass lower bound \eqref{spinteq} is attained.

In the above arguments the topological properties of the generalized exterior region play the important role of eliminating undesirable level set topologies. For each end $M_{end}$ of an asymptotically flat initial data set satisfying the dominant energy condition, there exists a corresponding generalized exterior region \cite[Proposition 2.1]{HKK}. To accomplish this
there are two primary steps. The first is to identify suitable (possibly immersed) MOTS and MITS to excise from $M$ in order to obtain a subset $M'\supset M_{end}$, whose compactification admits a metric of positive scalar curvature. This is established based on a reorganization of the arguments in \cite[Theorem 1.2]{ADGP}, and utilizes the dominant energy condition. The goal of the second step is then to reduce the first Betti number of $M'$ to zero by an iterative process that involves passing to finite sheeted covers. See Figure \ref{pic2}.

\begin{figure}[H]
\begin{picture}(0,0)
\put(5,50){\large{$(M,g,k)$}}
\put(305,50){\large{$(M_{ext},g_{ext},k_{ext})$}}
\end{picture}
\includegraphics[scale=.4]{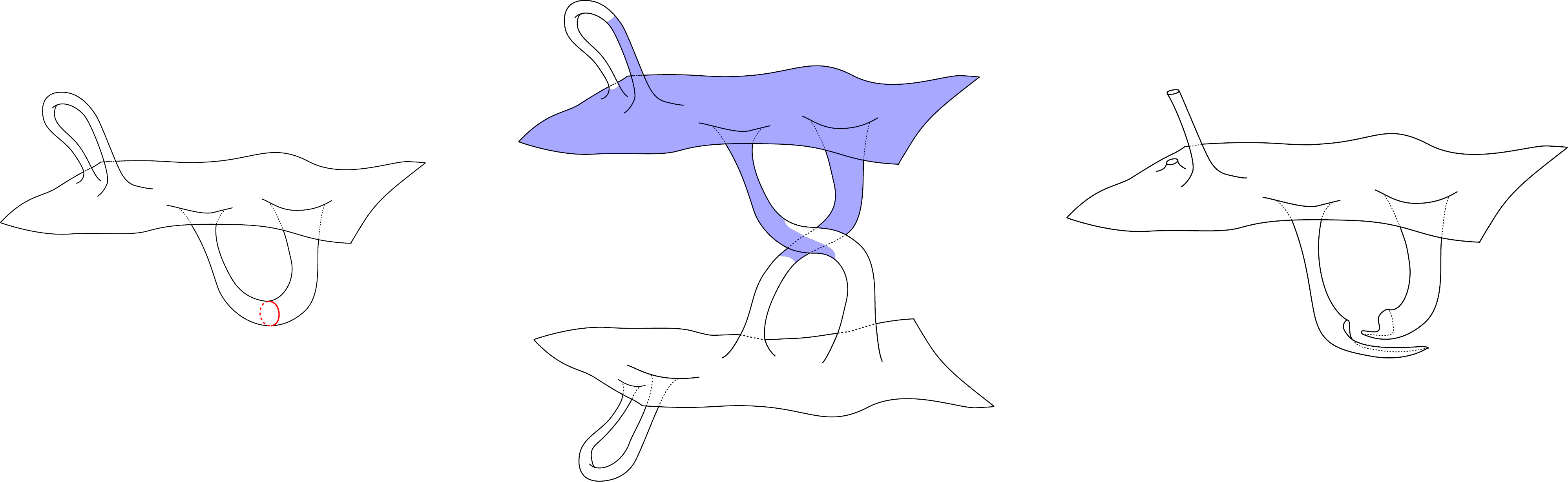}
\caption{A schematic description of the iterative covering space construction used to obtain a generalized exterior region. Starting with initial data possessing a nonvanishing first Betti number, pass to a double cover, and then remove an outermost MOTS to reduce the Betti number. This process may require a finite number of iterations to achieve vanishing first Betti number.}\label{pic2}
\end{figure}

\section{The Hyperbolic Positive Mass Theorem}
\label{S: hyperbolic PMT}
\setcounter{equation}{0}
\setcounter{section}{7}

It is natural to expect that the methods utilizing spacetime harmonic functions to establish the asymptotically flat spacetime positive mass theorem, should be applicable in the asymptotically hyperboloidal setting as well. Indeed, the model situation in both cases involves a spacelike hypersurfaces of Minkowski space, one ending at spacelike infinity and the other at null infinity. However, there are several significant technical hurdles that are present in the hyperboloidal framework that are moot in the asymptotically flat scenario. First, the existence problem for the appropriate asymptotically linear spacetime harmonic functions is highly nontrivial in the asymptotically hyperboloidal case. A primary reason for this concerns the nonlinear term $\left(\mathrm{Tr}_g k\right)|\nabla u|$. Since $\mathrm{Tr}_g k \sim 3$ the nonlinearity is not nullified in the asymptotic end, as is the case in the asymptotically flat setting where $\mathrm{Tr}_g k =O(r^{-q-1})$, and therefore plays an important role in determining the asymptotic behavior of solutions. Secondly the boundary integrals arising from \eqref{2222} that converge to the mass are extremely difficult to compute, and require a precise expansion of the solution $u$. Nonetheless, this complications may be overcome leading to Theorem \ref{thm3}, which is established in \cite{BHKKZ}. Below we discuss in more detail various aspects of the proof.

\subsection{Spacetime harmonic functions in the asymptotically hyperboloidal setting}

In Section \ref{jang} we discussed how null linear functions of the coordinates in Minkowski space, such as $\ell=-t+\langle\vec{a},x\rangle$ with $|\vec{a}|=1$, restrict to spacelike hypersurfaces to yield canonical spacetime harmonic functions. In the asymptotically flat case, where slices are asymptotically totally geodesic, $\ell\sim\langle\vec{a},x\rangle$ in the asymptotic end suggesting that this be used as the asymptote for spacetime harmonic functions in general as in \eqref{76890}. In the asymptotically hyperboloidal case
\begin{equation}
\ell\sim-\sqrt{1+r^2}+\langle \vec{a},x\rangle=v_{\vec{a}},
\end{equation}
and so it is this function that is used as the asymptote for spacetime harmonic functions in this context. Given a smooth complete asymptotically hyperboloidal 3-dimensional initial set $(M,g,k)$, we prove in \cite{BHKKZ} that there exists $u\in C^{2,\alpha}(M)$ solving
\begin{equation}\label{=}
\Delta u+\left(\mathrm{Tr}_g k\right)|\nabla u|=0\quad\text{ on }\quad M,
\quad\quad u=v_{\vec{a}}+O_{2}(r^{-\tau}|v_{\vec{a}}|^{\tau})\quad\text{ as }\quad r\rightarrow\infty,
\end{equation}
where $\tau=\min\left(\frac{3}{2},\frac{q+1}{2}\right)$. As in the previous section, the equation is solved first on finite exhausting domains $M_r$, with Dirichlet condition $u=v_{\vec{a}}$ on the boundary sphere $S_r$, by a fixed point theorem. Then subconvergence to a global solution is obtained as $r\rightarrow\infty$, with the aid of barriers that are quite delicate to construct due to the rather asymmetrical nature of $v_{\vec{a}}$. The intricate structure of the barriers is  comparable with the complications faced when solving the Jang equation on asymptotically hyperbolic manifolds \cite{Sakovich}.

It turns out that the asymptotics for the solution as expressed in \eqref{=} are not sufficient to control the boundary integrals arising from the identity \eqref{master}. A more careful analysis is needed, and in particular a precise expansion of several orders is required, which informally is takes the form
\begin{equation}\label{eq:introexpansion}
u=v_{\vec{a}}+\frac{|v_{\vec{a}}|^\tau}{r^\tau}\left(A+\frac{B}{r^2}+O(r^{-4})\right)
\end{equation}
where $A$ and $B$ are functions defined on the spherical conformal infinity that are related to one another by a certain elliptic PDE. This expression may be compared to the expansion of a harmonic functions in spherical harmonics. Owing to the nature of the nonlinearities present in the spacetime Laplacian, achieving \eqref{eq:introexpansion} is challenging and technical. Although this expansion can be shown to correctly identify the mass from the boundary terms of \eqref{2222}, a direct approach is problematic. For this reason we pursue an alternative approach which utilizes the property that, in several respects, \eqref{eq:introexpansion} improves as the order of asymptotic hyperbolicity $q$ increases. To exploit this, we construct initial data sets $(\tilde{M},\tilde{g},\tilde{k})$ which {\emph{interpolate}} between the original initial data set $(M,g,k)$, and the model hyperboloid $(\mathbb{H}^3,b,b)$ near infinity. There is an annular interpolation region in $\tilde{M}$ where geometrical features of $(M,g,k)$ such as the dominant energy condition are severely disturbed. Nevertheless, we demonstrate a sense in which this region remembers the mass of the original initial data set up to an error which shrinks as the interpolation region escapes to infinity.

These observations allows us to carry out the following strategy to prove Theorem \ref{thm3}. Taking advantage of the improved expansion for spacetime harmonic functions on $(\tilde{M},\tilde{g},\tilde{k})$, it is possible to establish a version of inequality \eqref{spinteqh} for $(\tilde{M},\tilde{g},\tilde{k})$. In this version the boundary integrals converge to zero at infinity, since the asymptotic end is exactly hyperbolic space.
We then choose the transition annular regions in $(\tilde{M},\tilde{g},\tilde{k})$ to occur further and further out into the asymptotic end, and show that the these bulk transition integrals converge to the mass. Thus, the interpolation regions `encode the mass'. This will be discussed in more detail within the next subsection. It should be remarked that this style of argument seems to be uniquely available to the spacetime harmonic function approach, since it requires the ability to obtain `mass formulas' without the validity of the dominant energy condition as this is violated in the transition to hyperbolic space. Lastly we mention that, as before, the vanishing second homology hypothesis is used to control the Euler characteristic of level sets.

\subsection{Interpolation and a new concept of mass}\label{SS: interpolation}

In order to illustrate the concept of \textit{interpolation mass}, we will give the example of a Schwarzschild metric of mass $m>0$ which transitions to the flat metric. Let $\rho>m/2$, and consider a function $\mathbf{m}\in C^{\infty}(\mathbb{R})$ satisfying the following properties
\begin{equation}\label{-}
\mathbf{m}(r)=
\begin{cases}
m & r<\rho\\
0 & r>2\rho
\end{cases},
\quad\quad 0\leq\mathbf{m}(r)\leq m,\quad\quad \rho |\mathbf{m}'(r)|+\rho^2 |\mathbf{m}''(r)|\leq C,
\end{equation}
where $C$ is independent of $\rho$. This function may be used to define the Riemannian manifold $(\mathbb{R}^3 \setminus B_{m/2},g)$ which is Schwarzschild of mass $m$ for $r<\rho$, and Euclidean for $r>2\rho$, namely
\begin{equation}
g=  \left(1+\frac {\mathbf{m}(r)}{2r}\right)^4\delta.
\end{equation}
Observe that the scalar curvature takes the form
\begin{equation}\label{R formula}
R=
\begin{cases}
0 & r\in [m/2,\rho]\cup [2\rho,\infty)\\
\left(-\frac{4}{r}+O_{1}(r^{-2})\right) \mathbf{m}''(r) & r\in(\rho,2\rho)
\end{cases}.
\end{equation}
Let $u$ be a $g$-harmonic function with zero Neumann data on $S_{m/2}$, such that $u=x+O_{2}(1)$ as $r\rightarrow\infty$; this may be achieved by a slight modification of \cite[Theorem 3.1]{Bartnik} and the solution is unique up to addition of a constant. Then according to Theorem \ref{thm1}, and the fact that $g$ is of zero mass, we have
\begin{equation}\label{11111}
0\ge\int_{\mathbb{R}^3 \setminus B_{m/2}}\left(\frac{|\nabla^2 u|^2}{|\nabla u|}
+R|\nabla u|\right)dV\geq \int_{B_{2\rho}\setminus B_{\rho}}R|\nabla u|dV
+\int_{B_{\rho}\setminus B_{m/2}}\left(\frac{|\nabla^2 u|^2}{|\nabla u|}
+R|\nabla u|\right)dV.
\end{equation}
Treating $\rho$ as a parameter, note that the sequence of metrics $g$ is uniformly asymptotically flat in light of \eqref{-}, and thus there are uniform estimates such that $|\nabla u|=1+O_{1}(r^{-1})$ and $|S_r|=4\pi r^2+O_{1}(r)$. It follows that
\begin{equation}
\int_{B_{2\rho}\setminus B_{\rho}}R|\nabla u|dV
=\int_{\rho}^{2\rho}\mathbf{m}''(r)\left(-\frac{4}{r}+O_{1}(r^{-2})\right)
\left(4\pi r^2 +O_{1}(r)\right)dr=-16\pi m +O(\rho^{-1})
\end{equation}
as $\rho\to\infty$. Hence, all the mass is `stored' in the scalar curvature integral over an annulus near infinity, and the desired mass formula \eqref{intthm1} may be obtained by taking the limit in \eqref{11111}.

To summarize, the mass may be interpreted as measuring the amount of negative scalar curvature one has to `pay' to deform an asymptotically flat manifold into Euclidean space. In fact, this method of encoding the mass by interpolation works in the same manner in the non-time symmetric case, where the scalar curvature integral is replaced by a dominant energy condition expression, and it is valid in the asymptotically hyperboloidal setting as well where it is utilized in the proof of Theorem \ref{thm3}.

\subsection{Alternative approach: double interpolation}\label{SS:alternative}

In the previous discussion there was a single interpolation, either to Euclidean or hyperbolic space, depending on whether the initial data are asymptotically flat or asymptotically hyperboloidal. However, we would like to  point out here an alternative approach that requires two interpolations. Start with an asymptotically hyperboloidal initial data set $(M,g,k)$. Then perform the first interpolation to achieve the initial data of the hyperboloid in Minkowski space outside a large coordinate sphere, and denote this newly deformed data by $(M',g',k')$. This will of course destroy the dominant energy condition within an annulus $A'$ in the asymptotic end, where the mass will be stored. In the second step, interpolate to Euclidean 3-space further out in the asymptotic end to obtain $(M'',g'',k'')$. Thus, there are inner and outer annuli $A'$ and $A''$, in which $A'$ encodes the mass but fails the dominant energy condition, while $A''$ contains no mass but satisfies the dominant energy condition.  Out side of $A'$ the data agrees with $(\mathbb{H}^3,b,b)$, while outside of $A''$ the data agrees with $(\mathbb{R}^3,\delta,0)$. See Figure \ref{pic11}. Note that in the second deformation the dominant energy condition is preserved, since the deformation can be carried explicitly in terms of an embedded spacelike hypersurfaces in Minkowski space. The new data $(M'',g'',k'')$ is now asymptotically flat, and hence we may apply the existence result for spacetime harmonic functions \cite{HKK} to obtain a solution with linear asymptotics. This function may then be used in the mass formula \eqref{spinteq}, following arguments similar to those in Section \ref{SS: interpolation}, to establish the hyperbolic version of the positive mass theorem by using the asymptotically flat result. The advantage of this process is that it avoids the complicated computations in the asymptotically hyperbolic end, in which it is shown that the boundary integrals from the mass formula converge to the appropriate quantity. On the other hand, it is more difficult to construct barriers for the spacetime harmonic function on $(M'',g'',k'')$, which are independent of the radial parameter determining the location of the two annuli $A'$ and $A''$. For this reason, in \cite{BHKKZ} this strategy is not pursued. Nonetheless, using the interpolation method we are able to prove a special case of the rigidity statement for the hyperbolic positive mass theorem. See for instance \cite{HJM}, where the full result is established.

\begin{figure}[H]
\centering
\begin{picture}(0,0)
\put(133,45){\large{$M$}}
\put(85,45){\large{$A'$}}
\put(50,45){\large{$\mathbb H^3$}}
\put(18,45){\large{$A''$}}
\put(-20,45){\large{$\R^3$}}
\end{picture}
\includegraphics[scale=.6]{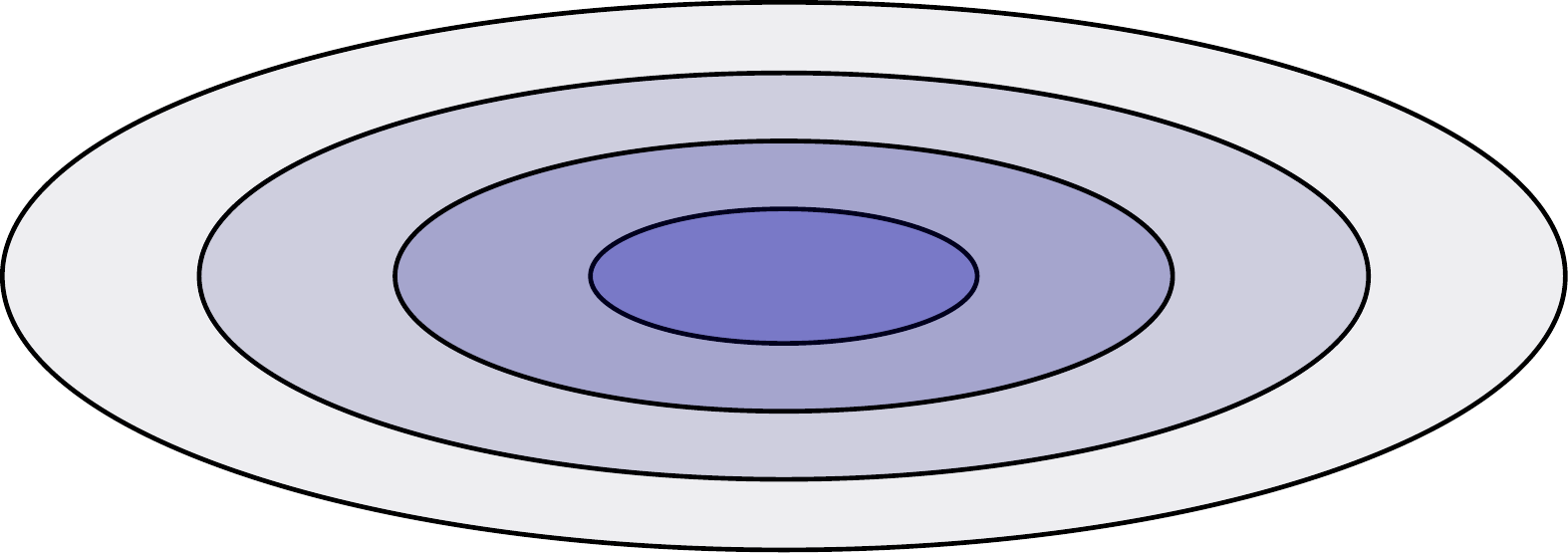}
\caption{Schematic of the double interpolation.}\label{pic11}
\end{figure}

\begin{proposition}
Let $(M,g)$ be a complete Riemannian 3-manifold with $R\geq -6$. If, outside a compact set, this manifold is isometric to the compliment of a ball hyperbolic space, then $(M,g)$ is globally isometric to $\mathbb{H}^3$.
\end{proposition}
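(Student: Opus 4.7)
The strategy is to apply the double-interpolation scheme of Section~\ref{SS:alternative} together with the asymptotically flat spacetime positive mass theorem, Theorem~\ref{thm2}. First, set $k=g$ so that $(M,g,g)$ becomes an asymptotically hyperboloidal initial data set. A direct computation gives $\mathrm{Tr}_g k = 3$ and $|k|_g^2 = 3$, so that $2\mu = R + 6 \ge 0$ and $J \equiv 0$; hence the dominant energy condition holds. Moreover, because $(M,g)$ is identically $\mathbb{H}^3$ outside a compact set $K$, the hyperbolic energy-momentum of $(M,g,g)$ vanishes: $E = 0 = |P|$.

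Next, for each large parameter $\rho$, construct asymptotically flat initial data $(M''_\rho, g''_\rho, k''_\rho)$ that agrees with $(M,g,g)$ on $\{r \le \rho\}$, with the trivial Minkowski slice $(\R^3, \delta, 0)$ on $\{r \ge 2\rho\}$, and interpolates across the annulus $\{\rho < r < 2\rho\}$ as an embedded spacelike hypersurface in Minkowski space. Since the interpolation is realized as a hypersurface in Minkowski space, the DEC is preserved throughout $M''_\rho$, and the mass-encoding principle of Section~\ref{SS: interpolation} yields $(E''_\rho, P''_\rho) \to (0,0)$ as $\rho \to \infty$.

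Fix $\vec a \in S^2$ and let $u_\rho$ be the asymptotically linear admissible spacetime harmonic function on $M''_\rho$ supplied by Theorem~\ref{thm2}; then
\begin{equation*}
E''_\rho + \langle \vec a, P''_\rho \rangle \;\ge\; \frac{1}{16\pi}\int_{M''_\rho}\!\left(\frac{|\bar\nabla^2 u_\rho|^2}{|\nabla u_\rho|} + 2(\mu''_\rho - |J''_\rho|)|\nabla u_\rho|\right) dV.
\end{equation*}
The left-hand side tends to zero as $\rho \to \infty$. A diagonal argument with uniform elliptic estimates extracts a limiting spacetime harmonic function $u$ on $(M,g,g)$, asymptotic to $v_{\vec a}$, for which the integrand vanishes identically. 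Since $k=g$ gives $2(\mu-|J|) = R+6$, this forces
\begin{equation*}
\nabla^2 u + |\nabla u|\, g \equiv 0 \quad \text{and} \quad R \equiv -6.
\end{equation*}
Running this construction for a basis $\vec a_1, \vec a_2, \vec a_3$ of $\R^3$ produces three spacetime harmonic functions whose gradients span $T_pM$ at every regular point. The overdetermined system $\nabla^2 u = -|\nabla u| g$ says that $\nabla u$ is a closed conformal Killing field; combined with $R \equiv -6$ and the asymptotic identification with $\mathbb{H}^3$, a standard Obata-type argument forces $(M,g)$ to have constant sectional curvature $-1$ and, in view of the prescribed end at infinity, to be globally isometric to $\mathbb{H}^3$.

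The principal difficulty will be the limit passage from $u_\rho$ to $u$: one must construct barriers and establish uniform $C^{2,\alpha}$ estimates for $u_\rho$ that are independent of the interpolation parameter $\rho$, which is the same hurdle flagged at the end of Section~\ref{SS:alternative}. A secondary subtlety is that the interior topology of $M$ inside $K$ may be nontrivial, so to apply Theorem~\ref{thm2} one may first need to pass to a generalized exterior region via the construction outlined in Section~\ref{S: spacetime PMT}.
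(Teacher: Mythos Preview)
Your proposal overcomplicates the argument in a way that creates precisely the technical obstacle you flag at the end, and which the paper's proof avoids entirely.

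The key point you miss is that, because $(M,g)$ is \emph{exactly} hyperbolic outside a compact set, a \emph{single} interpolation $(M'',g'',k'')$ to Euclidean data (with the transition annulus placed in the exactly-hyperbolic region) already has \emph{exactly} vanishing ADM energy-momentum and satisfies the dominant energy condition everywhere. There is no need for a $\rho$-family, no limit, and hence no uniform-in-$\rho$ barriers or diagonal extraction. The paper simply invokes the rigidity clause of Theorem~\ref{thm2} directly on this single $(M'',g'',k'')$: zero ADM mass plus DEC forces the data to arise from a spacelike embedding $\mathcal{I}:(M'',g'')\hookrightarrow\mathbb{M}^4$. This also disposes of your secondary worry about interior topology, since the embedding into Minkowski space handles it automatically.

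The paper then finishes with an elementary geometric argument rather than an Obata-type one. Restricting to the portion $(M_r,g,g)$ that sits inside the hyperbolic region, the umbilic condition $k=g$ implies that the ``center function'' $\mathcal{C}(x)=\mathcal{I}(x)-n(\mathcal{I}(x))$ has vanishing tangential derivatives, so it is constant; consequently $|\mathcal{I}(x)-\mathcal{C}_0|^2=-1$ and the image lies on a standard hyperboloid. This immediately gives the isometry with $\mathbb{H}^3$.

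In summary: your route through the integral inequality, limiting spacetime harmonic functions, and closed conformal Killing fields is not wrong in spirit, but it manufactures the hardest analytic difficulty of Section~\ref{SS:alternative} in a situation where the rigidity statement of Theorem~\ref{thm2} applies on the nose and renders all of that unnecessary.
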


\begin{proof}
The initial data $(M,g,g)$ may be viewed as the end result of a first interpolation, since it agrees with the standard hyperboloid outside a compact set. As described above, there is then an interpolation $(M'',g'',k'')$ between $(M,g,g)$ and Euclidean data, which in this case satisfies the dominant energy condition globally. Since this new manifold has zero ADM mass, the spacetime positive mass theorem, Theorem \ref{thm2}, then applies to show that $(M'',g'',k'')$ arises from an embedding into Minkowski space. Since the interpolated data agrees with the original into its hyperbolic end, there is a portion $(M_r,g,g)$ lying inside a large coordinate sphere which has an umbilic embedding in Minkowski space.

We claim that the image of the isometric embedding $\mathcal{I}:(M_r,g)\hookrightarrow\mathbb{M}^4$ must be a portion of the standard hyperboloid. To see this consider the `center function' $\mathcal{C}:M_r\to \mathbb{M}^4$ given by
\begin{equation}
\mathcal{C}(x)=\mathcal{I}(x)-n\left(\mathcal{I}(x)\right),
\end{equation}
where $n$ is the unit timelike normal to $\mathcal{I}(M_r)$ and $x=(x^1,x^2,x^3)$ are local coordinates. Let $T_i =\partial_i \mathcal{I}(x)$ denote the coordinate vector fields spanning the tangent space along the embedding. Then the umbilicity condition yields
\begin{equation}
\partial_i \mathcal{C}(x)=T_i-k_{i}^{j}T_j=T_i -g_i^j T_j=0,\quad\quad i=1,2,3.
\end{equation}
It follows that $\mathcal{C}(x)=\mathcal{C}_0$ is a constant map, and in fact
\begin{equation}
|\mathcal{I}(x)-\mathcal{C}_0|^2 =-1,
\end{equation}
so that $\mathcal{I}(M_r)$ lies in a Lorentzian sphere, or rather the standard hyperboloid. We may now conclude that $(M,g)$ is isometric to $\mathbb{H}^3$.
\end{proof}

\section{A Positive Mass Theorem With Charge}
\label{S: charge}
\setcounter{equation}{0}
\setcounter{section}{8}

In this section we establish Theorem \ref{thm4}. Let $(M,g,\mathcal{E})$ be a smooth complete charged asymptotically flat initial data set, having divergence free electric field $\mathcal{E}$, mass $m$, and total charge $Q$. In particular, $M$ is diffeomorphic to $\mathbb{R}^3 \setminus \cup_{i=1}^I p_i$ with a single asymptotically flat end $M_{end}$ and $I$ asymptotically cylindrical ends represented by $p_i$. We say that a subset of $M$ is an \textit{asymptotically cylindrical end} if there is a diffeomorphism
$\psi:M_{cyl}\rightarrow [1, \infty)\times S^2$ such that
\begin{equation}\label{acylindrical}
|^{g_0}\nabla^{l}\left(\psi_{*} g -g_0\right)|_{g_0}+|\psi_* \mathcal{E} - q_0\partial_s |_{g_0}=O(s^{-1})\quad\text{ as }\quad s\rightarrow\infty
\end{equation}
for $l=0,1,2$, where $g_0 = ds^2 +\sigma_0$ with $\sigma_0$ a metric on $S^2$, and $q_0 \neq 0$ is a constant that determines the charge of the end. This definition is modeled on the asymptotically cylindrical ends present in a time slice of the Majumdar-Papapetrou spacetime.

\subsection{The basic integral identity.}

The first step is to establish the primary integral identity on compact subdomains $\Omega\subset M$. Assume that $u$ satisfies the charged Laplacian equation
\begin{equation}\label{=--}
\Delta u-\langle\mathcal{E},\nabla u\rangle=0 \quad\text{ on }\quad \Omega,
\end{equation}
and recall the charged Hessian
\begin{equation}
\hat{\nabla}_{ij}u=\nabla_{ij}u+\mathcal{E}_i u_j+\mathcal{E}_j u_i
-\langle \mathcal{E},\nabla u\rangle g_{ij}
\end{equation}
which satisfies
\begin{equation}
|\nabla^2u|^2=|\hat\nabla^2u|^2-2|\mathcal{E}|^2|\nabla u|^2+\langle \mathcal{E},\nabla u\rangle^2-4\mathcal{E}^i u^j\nabla_{ij}u.
\end{equation}
Inserting equation \eqref{=--} into the main identity \eqref{7890}, assuming that $|\nabla u|\neq 0$, produces
\begin{equation}
\div\left(\nabla |\nabla u|-\Delta u\frac{\nabla u}{|\nabla u|}\right)
=\frac{1}{2|\nabla u|}\left(|\hat\nabla^2 u|^2-4\mathcal{E}^i u^j \nabla_{ij}u +|\nabla u|^2(R-2|\mathcal{E}|^2-2K)\right).
\end{equation}
Next observe that
\begin{align}
\begin{split}
-\frac1{|\nabla u|}\mathcal{E}^i u^j \nabla_{ij}u=&-\div\left(|\nabla u| \mathcal{E}\right)+|\nabla u|\mathrm{div}\mathcal{E}
+\frac1{|\nabla u|}\mathcal{E}^i u^j \nabla_{ij}u-\mathcal{E}^i\partial_i|\nabla u|\\
=&-\div\left(|\nabla u|\mathcal{E}\right)+|\nabla u|\mathrm{div}\mathcal{E}.
\end{split}
\end{align}
Combining the last two equations, using the divergence free property of the electric field, and integrating over $\Omega$ yields
\begin{equation}\label{======}
\int_{\partial \Omega}\left(\partial_\upsilon |\nabla u|-\Delta u\frac{\nabla_\upsilon u}{|\nabla u|}+2|\nabla u|\langle \mathcal{E},\upsilon\rangle\right)dA
\geq\frac{1}{2}\int_{\Omega}\left(\frac{|\hat\nabla^2u|^2}{|\nabla u|}+(R-2|\mathcal{E}|^2-2K)|\nabla u|\right)dV,
\end{equation}
where $\upsilon$ is the unit outer normal to $\partial \Omega$. The inequality, rather than equality, arises from the procedure that passes from the case $|\nabla u|\neq 0$ to the general setting, see \cite[Section 3]{HKK}.

\subsection{Existence of charged harmonic functions and the mass formula}

Consider a 2-parameter family of exhausting domains $M_{r,\epsilon}\subset M$, which are defined to consist of all points lying between a large coordinate sphere $S_r$ in the asymptotically flat end and the collection of coordinate ($s=\epsilon^{-1}$ cross-section) spheres $S_{\epsilon}$ in the asymptotically cylindrical ends. In order to better control level set topology, we will cap-off the spheres $S_{\epsilon}$ with a collection of 3-balls $\Omega_{\epsilon}$ to achieve a new manifold $\tilde{M}_{r,\epsilon}$ that is diffeomorphic to a 3-ball. The metric on $\tilde{M}_{r,\epsilon}$ is defined to be a smooth extension of $g$ such that the curvature of $g_{\epsilon}$, the restriction of the extended metric to $\Omega_{\epsilon}$, remains uniformly bounded. In addition, the electric field is extended trivially so that $\mathcal{E}=0$ on $\Omega_{\epsilon}$. This unrefined extension of the geometry and electric field will of course destroy the charged dominant energy condition $R\geq 2|\mathcal{E}|^2$ on $\Omega_{\epsilon}$, however it will be sufficient for our purposes.

We first construct a model function $v$ to which the desired charged harmonic function should asymptote. Let $x$ be one of the Cartesian coordinates in $M_{end}$. Since $\langle\mathcal{E},\partial_x\rangle\in C_{-q-1}^{0,\alpha}(M_{end})$, it follows from \cite[Theorem 3.1]{Bartnik} that there is a solution of the Dirichlet problem
\begin{equation}
\Delta v=\langle\mathcal{E},\partial_x\rangle\quad\quad\text{ on }\quad\quad \tilde{M}_{\infty,\epsilon}\setminus \tilde{M}_{r_0,\epsilon},
\end{equation}
\begin{equation}
v=0\quad\text{ on }\quad S_{r_0},\quad\quad\quad\quad v=x+O_2(r^{1-q})\quad\text{ as }\quad r\rightarrow\infty,
\end{equation}
where $q$ is as in \eqref{AF} and $r_0$ is a large fixed radius. The function $v$ may be extended smoothly so that it is defined on all of $\tilde{M}_{\infty,\epsilon}$, agrees with the solution above for $r>r_0$, and vanishes for $r<r_0 /2$. Now solve the Dirichlet problem
\begin{equation}
\Delta w_{r,\epsilon}-\langle\mathcal{E},\nabla w_{r,\epsilon}\rangle=f\quad\text{ on }\quad\tilde{M}_{r,\epsilon},\quad\quad\quad\quad w_{r,\epsilon}=0\quad\text{ on }\quad S_r,
\end{equation}
for $r>r_0$ where
\begin{equation}\label{787878}
f:=-\Delta v+\langle \mathcal{E},\nabla v\rangle=-\langle\mathcal{E},\partial_x\rangle+\langle \mathcal{E}, \partial_x+O(r^{-q})\rangle=O(r^{-2q-1}).
\end{equation}
Note that since $\mathcal{E}$ is discontinuous at $S_{\epsilon}$, the solution is only guaranteed to be $C^{1,\alpha}$ in a neighborhood of this surface, although it is smooth everywhere else. By setting $u_{r,\epsilon}=v+w_{r,\epsilon}$, we obtain a solution of \eqref{=--} on $\tilde{M}_{r,\epsilon}$ with $u_{r,\epsilon}=v$ on $S_r$, and uniform estimates will show that this sequence converges to the desired charged harmonic function.

Uniform $C^0$ bounds for $w_{r,\epsilon}$ may be established by constructing a global barrier. More precisely, in light of \eqref{787878} the appropriate uniform sub and super solutions may be found as in \cite[Section 4.2]{HKK} with the decay rate $r^{1-2q}$ in $M_{end}$. It follows from elliptic theory that there are uniform $W^{2,p}$ estimates on compact subsets, with control of higher order derivatives away from $S_{\epsilon}$, and thus $w_{r,\epsilon}$ subconverges to a solution $w_{\epsilon}$. We then have $u_{\epsilon}=v+w_{\epsilon}\in C^{1,\alpha}$, and smooth away from $S_{\epsilon}$, weakly satisfying
\begin{equation}\label{uuui}
\Delta u_{\epsilon}-\langle\mathcal{E},\nabla u_{\epsilon}\rangle=0\quad\text{ on }\quad\tilde{M}_{\infty,\epsilon},\quad\quad\quad u_{\epsilon}=v+O_{2}(r^{1-2q})\quad\text{ as }\quad r\rightarrow\infty.
\end{equation}
The barriers imply that $u_{\epsilon}$ remains uniformly bounded independent of $\epsilon$ along each capped-off cylindrical end. Thus, employing a separation of variables argument along each such end shows that for large $s$ we have
\begin{equation}
u_{\epsilon}\sim c_0+\sum_{j=1}^{\infty} c_j e^{\left(\frac{q_0 -\sqrt{q_0^2 +4\lambda_j}}{2}\right)s}\chi_{j},
\end{equation}
for some constants $c_j$ where $\lambda_j> 0$, $\chi_j$ are the eigenvalues and eigenfunctions of the Laplacian on $(S^2,\sigma_0)$. It follows that $u_{\epsilon}$ is nearly constant on $S_{\epsilon}$, and since $u_{\epsilon}$ is harmonic on $\Omega_{\epsilon}$ we find that
\begin{equation}\label{0-=}
\sup_{\Omega_{\epsilon}}|\nabla u_{\epsilon}|\rightarrow 0\quad\text{ as }\quad \epsilon\rightarrow 0.
\end{equation}

We are now in a position to establish the mass formula. In particular, since $\tilde{M}_{\infty,\epsilon}\cong\mathbb{R}^3$ has trivial topology, no component of a regular level set can be compact as otherwise it would bound a compact domain leading to a contradiction via the maximum principle. In analogy with \cite{HKK}, we may then
apply \eqref{======} separately on both components of $\tilde{M}_{\infty,\epsilon}\setminus S_{\epsilon}$, and add the resulting inequalities together, to obtain
\begin{align}\label{lkihgt}
\begin{split}
8\pi\left(m-|Q|\right)\geq &\frac{1}{2}\int_{\tilde{M}_{\infty,\epsilon}\setminus\Omega_{\epsilon}}
\left(\frac{|\hat{\nabla}^2 u_{\epsilon}|^2}{|\nabla u_{\epsilon}|}+(R-2|\mathcal{E}|^2)|\nabla u_{\epsilon}|\right)dV\\
&+\frac{1}{2}\int_{\Omega_{\epsilon}}R_{\epsilon}|\nabla u_{\epsilon}|dV_{\epsilon}+\int_{S_{\epsilon}}2|\nabla u_{\epsilon}|\langle\mathcal{E},\upsilon\rangle dA,
\end{split}
\end{align}
where $\upsilon$ is the unit normal to $S_{\epsilon}$ pointing to the asymptotically flat end, and $R_{\epsilon}$, $dV_{\epsilon}$ are the scalar curvature and volume form of $g_{\epsilon}$. It should be noted that the boundary term $\partial_{\upsilon}|\nabla u_{\epsilon}|$, at $S_{\epsilon}$, might appear to cause difficulty due to the fact that $u_{\epsilon}$ is only  $C^{1,\alpha}$ at across this surface. However, precisely this scenario has been treated in
\cite{HMT}, and is shown not to contribute adversely to the inequality. The estimate \eqref{0-=}, together with the control imposed on the geometry of $\Omega_{\epsilon}$, guarantees that the two integrals on the second line of \eqref{lkihgt} converge to zero.
Finally, the uniform estimates for $u_{\epsilon}$ on compact subsets shows that this sequence of functions subconverges to a solution $u$ of the charged Laplacian \eqref{=--} on $M$, which is bounded along the asymptotically cylindrical ends and satisfies the asymptotics of \eqref{uuui} in $M_{end}$. Moreover, \eqref{lkihgt} and Fatou's lemma imply that
\begin{equation}\label{frtyh}
m-|Q|\geq\frac{1}{16\pi}\int_{M}
\left(\frac{|\hat{\nabla}^2 u|^2}{|\nabla u|}+(R-2|\mathcal{E}|^2)|\nabla u|\right)dV,
\end{equation}
which gives the desired inequality of Theorem \ref{thm4}.

\subsection{The case of equality}

When the charged dominant energy condition $R\geq 2|\mathcal{E}|^2$ holds, \eqref{frtyh} implies that $m\geq |Q|$. Here we consider the rigidity phenomena when $m=|Q|$. This immediately implies that
\begin{equation}
R=2|\mathcal{E}|^2,\quad\quad
\nabla_{ij}u+\mathcal{E}_i u_j+\mathcal{E}_j u_i
-\langle \mathcal{E},\nabla u\rangle g_{ij}=0.
\end{equation}
Taking a divergence of the vanishing Hessian equation produces
\begin{equation}\label{67ty}
u^i R_{ij}-|\mathcal{E}|^2 u_j + \langle\mathcal{E},\nabla u\rangle \mathcal{E}_j +u^i\nabla_i \mathcal{E}_j=0.
\end{equation}
Note that $u$ may be chosen to asymptote to any one of the asymptotically
flat coordinate functions in $M_{end}$, with the validity of \eqref{67ty} unchanged. We will denote the three charged harmonic functions that asymptote to $x$, $y$, and $z$ by $u^{x}$, $u^y$, and $u^z$. If $\nabla u^x$, $\nabla u^y$, and $\nabla u^z$ are linearly independent at each point, it will
follow that
\begin{equation}\label{riccie}
R_{ij}-|\mathcal{E}|^2 g_{ij}+\mathcal{E}_i \mathcal{E}_j
+\nabla_i \mathcal{E}_j=0.
\end{equation}

To see that, indeed, these three gradient vector fields always form a basis for the tangent space, assume that there is a point $a\in M$ where this is fails. Then there are constants $c_x$, $c_y$, and $c_z$, not all zero, such that
\begin{equation}
V=c_x\nabla u^x+c_y\nabla u^y+c_z\nabla u^z
\end{equation}
vanishes at $a$. Let $b\in M_{end}$ be a point sufficiently far out in the asymptotic end, and connect $a$ to $b$ by a geodesic $\gamma$. Observe that
\begin{align}
\begin{split}
\nabla_{\dot\gamma}V_{i}
=&c_x \left(-\nabla_{\dot \gamma}u^x \mathcal{E}_i-\nabla_i u^x \langle \mathcal{E},\dot \gamma\rangle +\langle \mathcal{E},\nabla u^x\rangle \langle\partial_i, \dot \gamma\rangle \right)\\
&+c_y \left(-\nabla_{\dot \gamma}u^y \mathcal{E}_i-\nabla_i u^y \langle \mathcal{E},\dot \gamma\rangle+\langle \mathcal{E},\nabla u^y\rangle \langle\partial_i, \dot \gamma\rangle \right)\\
&+c_z \left(-\nabla_{\dot \gamma}u^z \mathcal{E}_i-\nabla_iu^z \langle \mathcal{E},\dot \gamma\rangle+\langle \mathcal{E},\nabla u^z\rangle
\langle\partial_i, \dot \gamma\rangle \right).
\end{split}
\end{align}
Therefore if $e_1$, $e_2$ are parallel fields along $\gamma$, such that $(e_1,e_2,\dot\gamma)$ forms an orthonormal basis, then
\begin{align}
\begin{split}
\nabla_{\dot\gamma}\langle V,\dot\gamma\rangle=&-\langle\mathcal{E},\dot\gamma\rangle \langle V,\dot\gamma\rangle+\langle\mathcal{E},e_1\rangle \langle V,e_1\rangle
+\langle\mathcal{E},e_2\rangle \langle V,e_2\rangle,\\
\nabla_{\dot\gamma}\langle V,e_i \rangle=&-\langle\mathcal{E},e_i\rangle
\langle V,\dot \gamma\rangle-\langle\mathcal{E},\dot\gamma\rangle \langle V,e_i\rangle,\quad\quad i=1,2.
\end{split}
\end{align}
This is a first order homogeneous linear system of ODEs for $V$ along $\gamma$. Since $|V(a)|=0$ it follows that $V$ vanishes along $\gamma$, and in particular $|V(b)|=0$. However, this contradicts the fact that the three gradient fields are linearly independent in the asymptotically flat end. We conclude that \eqref{riccie} is verified.

Next, notice that a consequence of \eqref{riccie} is that $\nabla \mathcal{E}$ is symmetric, and hence $\mathcal{E}$ is closed as a 1-form. Since the first cohomology $H^1(M)$ is trivial, there exists a globally defined function $h$ such that $\mathcal{E}=dh$. The function $h$ is harmonic, as $\mathcal{E}$ is divergence free, and according to the decay assumed on the electric field and the metric it follows that, after possibly adding a constant, $h=c/r+O(r^{-2q})$ for some constant $c$. Therefore, the metric $\tilde{g}=e^{-2h}g$ is asymptotically flat in $M_{end}$. Moreover, with the help of \eqref{riccie} we find that it is also flat
\begin{equation}
\tilde R_{ij}=R_{ij}+\nabla_{ij}h+h_i h_j+\left(\Delta h-|\nabla h|^2 \right)g_{ij}=\left(\Delta h \right)g_{ij}=0.
\end{equation}
Consider now the behavior of $h$ along an asymptotically cylindrical end. The decay along this end \eqref{acylindrical} yields $h\sim q_0 s$, and therefore each asymptotically cylindrical end is either conformally closed or opened in $\tilde{g}$ depending on whether $q_0$ is positive or negative. This may be seen more explicitly by making the change of radial coordinate $r=e^{-q_0 s}$. In the case that the end is conformally closed, $\tilde{g}$ is asymptotically conical and flat, thus the removable singularity theorem of \cite[Theorem 3.1]{SmithYang} shows that $\tilde{g}$ smooth extends across the singular point. In the case that there are conformally opened ends we arrive at a contradiction, as such an end would give a positive contribution to the mass of $M_{end}$, yet since $\tilde{g}$ is flat the mass of $(M_{end},\tilde{g})$ is zero. In particular, each $q_0> 0$ and so the electric charge associated with each asymptotically cylindrical end is positive. Furthermore, after adding a point for each asymptotically cylindrical end the metric $\tilde{g}$ is complete, so that $(M,\tilde{g})\cong (\mathbb{R}^3 \setminus \cup_{i=1}^I p_i , \delta)$. We then have $g=\phi^2 \delta$ where $\phi=e^h$. Finally, a computation shows that $\Delta_{\delta}\phi=0$ and $\phi=O(r_i^{-1})$ where $r_i$ is the Euclidean distance to $p_i$. Since in addition $\phi\rightarrow 1$ in the asymptotically flat end, the removable singularity theorem for harmonic functions combined with the maximum principle shows that
\begin{equation}
\phi=1+\sum_{i=1}^{I}\frac{q_i}{r_i},
\end{equation}
for some positive constants $q_i$. The original initial data $(M,g,\mathcal{E})$ is then isometric to the time slice of a Majumdar-Papapetrou spacetime.

\section{Some Open Questions}
\label{S: questions}
\setcounter{equation}{0}
\setcounter{section}{9}

\subsection{Extension to higher dimensions}

One of the primary unresolved issues surrounding the level set techniques discussed here, is whether they can be generalized to higher dimensions in a meaningful way. Although independent workarounds for dealing with the singularities of stable minimal hypersurfaces
have been found \cite{Lo1,SY3}, and the spinorial techniques are available in all dimensions for spin manifolds, it would be of significant interest for the study of scalar curvature to extend the relatively simple harmonic level set approach to obtain an alternate proof of the positive mass theorem in higher dimensions. One of the primary difficulties is the reliance, in three dimensions, on the Gauss-Bonnet theorem for regular level sets. In higher dimensions, the total scalar curvature integral appears in its place but is not related to topological information. Perhaps some form of dimensional reduction can be carried in this context, similar to the strategy of Schoen and Yau \cite{SY}.

\subsection{Spacetime Penrose inequality}

Apparent horizons within initial data should contribute to the ADM mass. The precise relationship expressing this conjecture is known as the Penrose inequality. Namely, if the dominant energy condition is satisfied then the mass should satisfy the lower bound $m\geq\sqrt{\mathcal{A}/16\pi}$, where $\mathcal{A}$ is the minimal area required to enclose the outermost apparent horizon, and equality should be achieved only for slices of the Schwarzschild black hole. While this statement has been confirmed \cite{Bray1,HI} in the time symmetric case $k=0$, it remains open in general. In this article we have seen how to extend level set techniques for the Riemannian positive mass theorem to the spacetime setting, using spacetime harmonic functions. Since the proof of the Riemannian Penrose inequality given by Huisken and Ilmanen \cite{HI} is based on a level set characterization of inverse mean curvature flow, it remains an intriguing possibility that a similar extension may be possible that is based on the level sets of some as yet undiscovered `spacetime inverse mean curvature flow'. A potential candidate for this purpose is the inverse null mean curvature flow studied by Moore \cite{Moore}.

\subsection{Other geometric inequalities}

We have seen in this article how choosing different expressions for $\Delta u$, in combination with the primary identity \eqref{master}, has led to several mass related inequalities. Surely there are more choices for equations to be found, with further applications. In particular, with each model stationary electro-vacuum black hole solution, there is a corresponding conjectured geometric inequality relating the total mass to the area of horizons, angular momentum, and charge of the black holes contained within  initial data. These relations are ultimately motivated by the grand cosmic censorship conjecture \cite{Mars,Penrose}. The classical Penrose inequality discussed above is one member of this family. Very little is known for several of these inequalities, including the Penrose inequality with angular momentum and the hyperbolic Penrose inequality, of which there are two versions depending on whether the apparent horizon is a minimal surface or a surface of constant mean curvature $H=2$. Is it possible that the level set techniques discussed here can be developed further to address this larger class of inequalities?

\subsection{Some enigmatic features}

There are a few aspects of the level set method with spacetime harmonic functions that are not quite well-understood. As discussed in Section \ref{S: spacetime harmonic functions}, the spacetime harmonic function equation naturally fits into a larger class of equations that are broken up into three groups, depending on the causal character of the hyperplanes in Minkowski space which generate the canonical solutions. The functions used in the proof of the spacetime and hyperboloidal positive mass theorem arise from the `null' class of such equations. While the `spacelike' version seems to be undesirable due to the possibility of producing irregular slicings even for initial data within Minkowski space, it is not clear why the `timelike' spacetime harmonic functions should not be as useful as their null counterparts. 

Another related aspect that deserves further investigation is the choice of asymptote for the spacetime harmonic functions. In both the asymptotically flat and asymptotically hyperboloidal settings, the asymptotes of spacetime harmonic functions have gradients that are either Killing or more generally conformal Killing fields with respect to the background model geometry. 
It would be of interest to determine what applications may arise by utilizing the other Killing/conformal Killing fields as asymptotes for the gradient of spacetime harmonic functions.

\end{document}